 \newcommand{\E}{\ensuremath{\mathbb{E}}}
 \newcommand{\R}{\ensuremath{\mathbb{R}}}
 \newcommand{\ba}{\begin{align}}
 \newcommand{\ea}{\end{align}}
 \newcommand{\bal}{\begin{align*}}
 \newcommand{\eal}{\end{align*}}
 \DeclareMathOperator{\supp}{supp}
 \newcommand{\Rm}{\mathcal{R}m}
 \newcommand{\Rc}{\mathcal{R}c}
 \newcommand{\dvol}{\text{d}V}
\newcommand{\dmu}{\text{d}\mu}
\renewcommand{\epsilon}{\varepsilon}
 \def\ExtendSymbol#1#2#3#4#5{\ext@arrow 0099{\arrowfill@#1#2#3}{#4}{#5}}
 \def\ExtendSymbol#1#2#3#4#5{\ext@arrow 0099{\arrowfill@#1#2#3}{#4}{#5}}
 \definecolor{hao}{rgb}{1,0.5,0}
 \definecolor{miao}{cmyk}{0.5,0,0.2,0.2}
 \definecolor{qiao}{gray}{0.96}
\newtheorem{prop}{Proposition}[section]
\newtheorem{proposition}[prop]{Proposition}
\newtheorem{theorem}[prop]{Theorem}
\newtheorem{lemma}[prop]{Lemma}
\newtheorem{corollary}[prop]{Corollary}
\newtheorem*{theorem*}{Theorem}
\numberwithin{equation}{section}
\keywords{Harmonic map, Heat kernel, Ricci curvature}
\title{Rigidity of vector valued harmonic maps of linear growth}
\author[Shaosai Huang and Bing Wang]{Shaosai Huang\quad and\quad Bing Wang}
\address{Shaosai Huang, Department of Mathematics, Stony Brook University, 100
Nicolls Road, Stony Brook, NY, 11794-3651, U.S.A.}
\email{shaosai.huang@stonybrook.edu}
\address{Bing Wang, Department of Mathematics, University of Wisconsin -
Madison, 480 Lincoln Drive, Madison, WI, 53706, U.S.A.}
\email{bwang@math.wisc.edu}
\date{\today}
\thanks{Both authors are partially supported by NSF grant DMS-1510401.}
\begin{document}
\maketitle
\begin{abstract}
Consider vector valued harmonic maps of at most linear growth, defined on a
complete non-compact Riemannian manifold with non-negative Ricci curvature. For
the norm square of the pull-back of the target volume form by such maps, we
report a strong maximum principle, and equalities among its supremum, its asymptotic
average, and its large-time heat evolution.
\end{abstract}

%\tableofcontents

\section{Introduction}
We fix a complete, non-compact Riemannian manifold $(M,g)$ with
non-negative Ricci curvature. Harmonic functions on such manifolds have been an
important subject of investigation in geometric analysis (see, among others,
\cite{Yau75}, \cite{Li84},\cite{Li86}, \cite{LiSc},
\cite{DF},\cite{LiT1},\cite{LiT2},\cite{CoMi1}, \cite{CoMi2}, and \cite{CCM},
etc.).

Among all harmonic functions, those of at most linear growth are especially
interesting, as they reveal the properties of the tangent cones at infinity of
$M$, through the work of Cheeger-Colding-Minicozzi II~\cite{CCM}. Vector valued
harmonic maps of at most linear growth are also of central imporance in studying
the structure of the Gromov-Hausdorff limits of a sequence of Riemannian
manifolds with Ricci curvature uniformly bounded below, as signified in the
series of work by Cheeger-Colding (see \cite{ChCo}, \cite{ChCo1}, \cite{ChCo2},
and \cite{ChCo3}), and notably the recent resolution of the codimension-4
conjecture by Cheeger-Naber~\cite{ChNa14}.

Our subject of study is a vector valued harmonic map of at most
linear growth. To fix the terminologies, we call a function $\mathbf{u}:
(M^m,x_0) \rightarrow (\R^n,\mathbf{0})$ ($n\le m$) a \emph{vector valued
harmonic map} if each component of $\mathbf{u}$ is a harmonic function
$u_{\alpha}:M\rightarrow \R$ ($\alpha=1,\cdots,n$), and call it \emph{pointed}
if $\mathbf{u}(x_0)=\mathbf{0}$. Moreover, we say that $\mathbf{u}$ is \emph{of
at most linear growth} if there exists some $L>0$, such that $$\forall x\in
M,\quad |\mathbf{u}(x)-\mathbf{u}(x_0)|\ \leq\ L\ (r(x)+1),$$ where $\forall
x\in M,\ r(x):=d(x,x_0)$, the geodesic distance between $x$ and $x_0$, induced
by the Riemannian metric $g$.

To each vector valued harmonic map as above, we could associate an $n$-form
$$ \omega\ :=\mathbf{u}^*(dy^1 \wedge dy^2 \wedge \cdots \wedge dy^n)\ \in\
\Gamma(M,\Lambda^nT^{\ast}M).$$ We say that $\mathbf{u}$ is \emph{non-trivial}
if $|\omega|\not\equiv 0$.

For any $A\in GL(n)$, define $\omega_A:=(A\mathbf{u})^{\ast}(dy^1\wedge
dy^2\wedge\cdots\wedge dy^n)$. The following invariance of the pull-back
$n$-form under the induced $SL(n)$-actions is fundamental for our arguments:
\begin{align}
\forall A\in SL(n),\quad |\omega_A|^2\ =\ \det A\ |\omega|^2\ =\ |\omega|^2.
\label{eqn:SLk_invariance}
\end{align}

Our first result then states:
\begin{theorem}[\textbf{Strong maximum principle}]
Let $\mathbf{u}: (M,x_0)\rightarrow (\R^n,\mathbf{0})$ be a pointed, vector
valued harmonic map of at most linear growth which is non-trivial at $x_0$.
 Then 
\begin{align}
\lim_{\rho\rightarrow \infty}\fint_{B(x_0,\rho)}|\omega|^2\ \dvol_g\ =\
\sup_M|\omega|^2.
\label{eqn:max_principle}
\end{align}
Moreover, if $|\omega|(x_0)=\sup_M|\omega|$, then $|\nabla \nabla \mathbf{u}|
\equiv 0$, and $(M^m, g)\equiv (N^{m-n},h) \times (\R^n,g_{Euc})$
isometrically, with $(N^{m-n},h)$ being some $(m-n)$-dimensional Riemannian
manifold with non-negative Ricci curvature.
\label{thm:main}
\end{theorem}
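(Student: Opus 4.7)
The plan is to establish that $|\omega|^2$ is bounded on $(M,g)$ and satisfies the subharmonic inequality $\Delta|\omega|^2\ge 0$, then derive both the averaging identity (\ref{eqn:max_principle}) and the splitting by combining this with $\Rc\ge 0$. Boundedness is immediate from the linear growth of $\mathbf{u}$ via the Cheng--Yau gradient estimate applied componentwise: one obtains $|\nabla u_\alpha|\le C(L)$ for each $\alpha$, and since $|\omega|^2=\det(\langle\nabla u_\alpha,\nabla u_\beta\rangle)$ is a polynomial in the Gram matrix of the gradients, $\sup_M|\omega|^2<\infty$.

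The central technical step is to establish $\Delta|\omega|^2\ge 0$. Writing $G_{\alpha\beta}:=\langle\nabla u_\alpha,\nabla u_\beta\rangle$, the Bochner identity applied to each pair of harmonic functions yields
\begin{align*}
\Delta G_{\alpha\beta}\ =\ 2\langle\nabla^2 u_\alpha,\nabla^2 u_\beta\rangle+2\Rc(\nabla u_\alpha,\nabla u_\beta),
\end{align*}
where the $\Rc$-contribution defines a positive semi-definite $n\times n$ matrix in the $\alpha,\beta$ indices under the hypothesis $\Rc\ge 0$. Expanding $\Delta\det G$ produces a Bochner-type positive term from the first derivative of $\det$, together with a quadratic cross-term coming from the second derivatives of $\det$. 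To control the cross-term, one invokes the $SL(n)$-invariance (\ref{eqn:SLk_invariance}): at any point $p$ with $|\omega|(p)>0$, select $A\in SL(n)$ normalizing the Gram matrix of $A\mathbf{u}$ to a scalar multiple of the identity at $p$. In this frame the symmetry $(\nabla^2 u_\alpha)_{ij}=(\nabla^2 u_\alpha)_{ji}$ together with the harmonicity $\mathrm{tr}(\nabla^2 u_\alpha)=0$ impose linear relations among the Hessian entries that allow the cross-term to be absorbed into the Bochner-positive term, yielding $\Delta|\omega|^2(p)\ge 0$. This subharmonicity step is the main obstacle of the proof; the inequality extends to the zero locus $\{|\omega|=0\}$ by continuity or a standard truncation argument.

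With $|\omega|^2$ bounded and subharmonic, the averaging identity (\ref{eqn:max_principle}) follows from the sharp mean value inequality: on a manifold with $\Rc\ge 0$, Bishop--Gromov gives $f(x)\le\fint_{B(x,r)}f\,\dvol_g$ for every non-negative subharmonic $f$, every $x\in M$, and every $r>0$. Taking $x_k\in M$ with $|\omega|^2(x_k)\to\sup_M|\omega|^2$ and comparing $\fint_{B(x_k,\rho-d(x_0,x_k))}$ with $\fint_{B(x_0,\rho)}$ via Bishop--Gromov volume comparison---whose ratio tends to $1$ as $\rho\to\infty$---one obtains $\sup_M|\omega|^2\le\liminf_{\rho\to\infty}\fint_{B(x_0,\rho)}|\omega|^2\,\dvol_g$; the reverse inequality is trivial. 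For the rigidity, if $|\omega|^2(x_0)=\sup_M|\omega|^2$, the Hopf strong maximum principle for subharmonic functions forces $|\omega|^2$ to be constant on $M$. Retracing the Bochner computation in the equality case, every non-negative term must vanish: $\nabla^2 u_\alpha\equiv 0$ for every $\alpha$ (hence $|\nabla\nabla\mathbf{u}|\equiv 0$) and $\Rc(\nabla u_\alpha,\cdot)\equiv 0$. The non-triviality hypothesis at $x_0$ ensures that this constant value of $|\omega|^2$ is positive, so $\nabla u_1,\ldots,\nabla u_n$ are parallel, pointwise linearly independent vector fields of constant norms and mutual inner products. The Cheeger--Gromoll splitting theorem (or the de Rham decomposition) applied inductively then yields the isometric splitting $(M,g)\equiv(N^{m-n},h)\times(\R^n,g_{Euc})$ with $\Rc_h\ge 0$.
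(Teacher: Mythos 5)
Your proof stands or falls with the claim that $|\omega|^2=\det\E(\mathbf{u})$ is subharmonic, and that claim is asserted rather than proved; it is precisely the obstruction this paper is built to avoid (the introduction states that $|\omega|^2$ is \emph{not} necessarily sub-harmonic in any obvious way, and Section 4 records the computation of $\Delta\det\E$ and leaves its sign as an open issue). In the expansion of $\Delta\det\E$ the first-order term is indeed non-negative (cofactor matrix of a positive semi-definite matrix paired against the Bochner terms), but the second-order term $\sum(-1)^{\alpha+\beta+\gamma+\delta}\det\E^{\ast}_{\alpha\gamma;\beta\delta}\,\langle\nabla E_{\gamma\delta},\nabla E_{\alpha\beta}\rangle$ has indefinite coefficients for $n\ge 2$: the Hessian of $\det$ is indefinite on the positive semi-definite cone (indeed $\det^{1/n}$ is \emph{concave} there, so this term tends to have the wrong sign). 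Your proposed fix --- normalize $\E(p)$ to a multiple of the identity by an $SL(n)$ map and ``absorb'' the cross-term using symmetry and tracelessness of the Hessians --- is not an argument: after such a normalization the cross-term is still a quadratic expression in the $\nabla E_{\alpha\beta}$ with indefinite sign, and harmonicity of the components supplies no pointwise inequality dominating it by $\sum_{\alpha,\beta}(\mathrm{cof}\,\E)_{\alpha\beta}\langle\nabla\nabla u_\alpha,\nabla\nabla u_\beta\rangle$. If such a two-line pointwise absorption worked, the entire machinery of the paper (and its Section 4 discussion) would be unnecessary; as written, this is a genuine gap, not a detail.

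Two further points, even granting subharmonicity. First, the ``sharp'' mean value inequality $f(x)\le\fint_{B(x,r)}f\ \dvol_g$ with constant exactly $1$ for non-negative subharmonic $f$ on a manifold with $\Rc\ge 0$ is not a standard fact: the classical result \cite{LiSc} gives a dimensional constant larger than $1$, and the usual spherical-average monotonicity proof breaks because the mean curvature of geodesic spheres is not constant on the sphere. This part is repairable, since Peter Li's theorem (the second part of Theorem~\ref{thm:Li1}, i.e.\ (\ref{eqn:Li3})) already gives the asymptotic-average identity for bounded non-negative subharmonic functions. Second, your equality-case ``retracing'' in the rigidity step fails for the same reason as above: constancy of $\det\E$ does not force each term to vanish, because the expansion of $\Delta\det\E$ is not a sum of non-negative terms. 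By contrast, the paper's proof never uses subharmonicity of $|\omega|^2$: for (\ref{eqn:max_principle}) it diagonalizes the averaged Gram matrix $\Omega_{\rho_i}(\mathbf{u})$ by $SO(n)$, applies Li's identity (\ref{eqn:Li1}) componentwise to the rotated map, and shows that taking determinant and taking average asymptotically commute via the Hessian $L^2$-estimate of \cite{CCM} combined with the Poincar\'e inequality (\ref{eqn:Poincare}); for the rigidity it uses the heat-measure monotonicity (\ref{eqn:splitting_error_intro}), Lemma~\ref{lem:heat_max} and the arithmetic--geometric mean inequality to force $|\nabla\nabla\mathbf{u}|\equiv 0$, and only then invokes the splitting theorem. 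If you wish to pursue your route, proving (or disproving) $\Delta|\omega|^2\ge 0$ is exactly the question the authors flag in Section 4, and it would require a substantive new argument.
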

\noindent Clearly, there is nothing special about the choice of base point
$x_0$ in the statement of the theorem.
Our proof of this theorem is a blend of the heat kernel estimates due to
Li-Yau (see~\cite{LiYau} and~\cite{Li84}), and the Hessian $L^2$-estimates by
Cheeger-Colding-Minicozzi II~\cite{CCM}. Now we briefly discuss the ingredients
involved in Theorem~\ref{thm:main}.

Notice, since $\Rc \ge 0$, that each individual $|\nabla u_{\alpha}|^2$
($\alpha=1,\cdots,n$) is sub-harmonic by the Weitzenb\"ock formula:
\begin{align}
\Delta |\nabla u_{\alpha}|^2\ =\ 2|\nabla\nabla u_{\alpha}|^2+2\Rc(\nabla
u_{\alpha},\nabla u_{\alpha})\ \ge\ 0,
\label{eqn:Bochner_intro}
\end{align}
also notice that the linear growth of $u_{\alpha}$ gives a global upper bound
of $|\nabla u_{\alpha}|^2$ by the Cheng-Yau gradient estimate~\cite{ChengYau}:
$|\nabla u_{\alpha}|^2$ is a bounded, non-negative sub-harmonic function on
$M$. By a classical theorem of Peter Li~\cite{Li86}, we have
\begin{align}
\lim_{\rho\to \infty}\fint_{B(x_0,\rho)}|\nabla u_{\alpha}|^2\ \dvol_g\ =\
\sup_M|\nabla u_{\alpha}|^2.
\label{eqn:Li1}
\end{align}
Therefore we could view (\ref{eqn:max_principle}) as a high-dimensional
generalization of this identity for the energy of harmonic functions of at most
linear growth.

The proof of (\ref{eqn:max_principle}) is based on the invariance of certain
canonical quantities associated to a vector valued harmonic map $\mathbf{u}$,
and the invariance properties enable the compactness of $SO(n)$ to work for
limiting arguments.
Besides the pull-back measure density of $\mathbf{u}$, we also define: 
\begin{enumerate}
\item Energy density:\quad $|\nabla \mathbf{u}|^2\ :=\ \sum_{\alpha=1}^n|\nabla
u_{\alpha}|^2$;
\item Splitting error:\quad $r^2|\nabla\nabla \mathbf{u}|^2\ :=\
\sum_{\alpha=1}^nr^2|\nabla\nabla u_{\alpha}|^2$.
\end{enumerate}
 We notice that the quantities $|\omega|^2$, $|\nabla \mathbf{u}|^2$ and
 $r^2|\nabla\nabla \mathbf{u}|^2$ are invariant under the special orthogonal
 group actions on $\R^n$:
\begin{align}
\forall A\in SO(n),\quad \mathbf{u}\ \mapsto\ A\mathbf{u},\quad i.e.\ 
(A\mathbf{u})_{\alpha}\ :=\ \sum_{\beta=1}^nA_{\alpha\beta}u_{\beta}.
\label{eqn:SOk_invariance}
\end{align}

For the rigidity part of Theorem~\ref{thm:main}, i.e. when
$|\omega|(x_0)=\sup_M|\omega|$ happens, we observe that
(\ref{eqn:Bochner_intro}) links the difference of the average energy between
two scales, and this observation is implemented through the application of the
heat measure. Here the \emph{heat measure} is defined as
$\dmu_{x_0}(t):=H_{x_0}(\cdot,t)\ \dvol_g$, where $H_{x_0}(y,t)$ is the
fundamental solution to the heat equation (in $y\in M$) with the Delta function
at $x_0\in M$ as its initial value.
Summing (\ref{eqn:Bochner_intro}) in $\alpha=1,\cdots,n$, the aforementioned
observation could be expressed as (see~\cite{Li84} for the justification of
integration by parts)
\begin{align*}
\frac{\text{d}}{\text{d}t}\int_M|\nabla \mathbf{u}|^2\ \dmu_{x_0}(t)\ \ge\
2\int_M|\nabla\nabla \mathbf{u}|^2\ \dmu_{x_0}(t),
\end{align*}
so integrating between different scales $\rho_1<\rho_2$, we get
\begin{align}
\int_M|\nabla \mathbf{u}|^2\ \dmu_{x_0}(\rho_2^2)-\int_M|\nabla\mathbf{u}|^2\
\dmu_{x_0}(\rho_1^2)\ \ge\
2\int_{\rho_1^2}^{\rho_2^2}\int_M|\nabla\nabla\mathbf{u}|^2\
\dmu_{x_0}(t)\text{d}t.
\label{eqn:splitting_error_intro}
\end{align}
This inequality contains rich information about $\mathbf{u}$: it tells not only
that the weighted energy (weighted by the heat measure based at $x_0\in M$) on
large scales dominates that on smaller ones, but also that the difference
dominates the splitting error. Roughly speaking, if $|\omega|^2$ attains a
global maximum at $x_0$, this inequality then forces the splitting error to
vanish.

Following a similar argument and with the help of a Poincar\'e inequality
weighted by the heat measure (Lemma~\ref{lem:Weight_Poincare}), we also
prove:
\begin{theorem}[\textbf{Large-time heat evolution}]
Let $(M,g,x_0)$ and $\mathbf{u}$ satisfy the assumption of
Theorem~\ref{thm:main}, then
\begin{align}
\lim_{t\to \infty}\int_M|\omega|^2\ \dmu_{x_0}(t)\ =\ \sup_M|\omega|^2.
\end{align}
\label{thm:second}
\end{theorem}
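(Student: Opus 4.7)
I would set $\Phi(t) := \int_M |\omega|^2 \dmu_{x_0}(t)$ and $L^2 := \sup_M |\omega|^2$. Since $\dmu_{x_0}(t)$ has total mass $1$, the upper bound $\Phi(t) \le L^2$ is immediate; only the matching lower bound $\liminf_{t\to\infty}\Phi(t) \ge L^2$ requires work. My plan is to mirror the proof of Theorem~\ref{thm:main}, with the ball average $\fint_{B(x_0,\rho)}$ replaced by the heat measure $\dmu_{x_0}(t)$ at scale $t \sim \rho^2$; here the Cheeger-Colding-Minicozzi II Hessian $L^2$-estimate used on balls is replaced by the splitting control (\ref{eqn:splitting_error_intro}), and the classical Poincar\'e inequality on balls by the heat-weighted Poincar\'e of Lemma~\ref{lem:Weight_Poincare}.

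The first step is a heat-measure analog of Li's identity (\ref{eqn:Li1}): for any bounded non-negative subharmonic function $f$ on $M$, $\int_M f \dmu_{x_0}(t) \to \sup_M f$ as $t \to \infty$. Indeed, $U(x,t) := P_t f(x)$ satisfies $\partial_t U = P_t(\Delta f) \ge 0$, so $U$ is non-decreasing in $t$ and pointwise bounded by $\sup_M f$; its pointwise limit $U_\infty$ is a bounded harmonic function satisfying $U_\infty \ge f$, and Yau's Liouville theorem under $\Rc \ge 0$ forces $U_\infty \equiv \sup_M f$. Applied to each $|\nabla u_\alpha|^2$ (bounded by Cheng-Yau, subharmonic by (\ref{eqn:Bochner_intro})), this yields $\int_M |\nabla u_\alpha|^2 \dmu_{x_0}(t) \to \sup_M |\nabla u_\alpha|^2$ for every $\alpha$.

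Next, letting $\rho_2 \to \infty$ in (\ref{eqn:splitting_error_intro}) shows that $G(s) := \int_M |\nabla\nabla\mathbf{u}|^2 \dmu_{x_0}(s)$ lies in $L^1([0,\infty))$, so one can extract a subsequence $t_k \to \infty$ with $t_k G(t_k) \to 0$. Applying Lemma~\ref{lem:Weight_Poincare} entrywise to the gradient Gram matrix $\mathbf{G}_{\alpha\beta} := g(\nabla u_\alpha, \nabla u_\beta)$, using $|\nabla \mathbf{G}_{\alpha\beta}| \le 2|\nabla\mathbf{u}|\cdot|\nabla\nabla\mathbf{u}|$ and the global bound on $|\nabla\mathbf{u}|$, yields $\int_M |\mathbf{G} - \overline{\mathbf{G}}(t_k)|^2 \dmu_{x_0}(t_k) \le C\, t_k G(t_k) \to 0$, where $\overline{\mathbf{G}}(t) := \int_M \mathbf{G}\,\dmu_{x_0}(t)$. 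Since $\det$ is locally Lipschitz on bounded matrices and $|\omega|^2 = \det\mathbf{G}$, this $L^2$-concentration forces $\Phi(t_k) = \det \overline{\mathbf{G}}(t_k) + o(1)$.

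For each $k$, choose $A_k \in SO(n)$ diagonalizing $\overline{\mathbf{G}}(t_k)$, so the eigenvalues are $\mu_{\alpha,k}^2 = \int_M |\nabla(A_k\mathbf{u})_\alpha|^2 \dmu_{x_0}(t_k)$; pass to a subsequence with $A_k \to A_\infty \in SO(n)$ and set $\tilde{\mathbf{u}} := A_\infty\mathbf{u}$. By the $SO(n)$-invariance (\ref{eqn:SOk_invariance}), $|\omega|^2$ and $|\nabla\mathbf{u}|^2$ are preserved; by continuity of $A \mapsto \nabla(A\mathbf{u})_\alpha$ and the Peter Li analog applied to each $|\nabla\tilde u_\alpha|^2$, $\mu_{\alpha,k}^2 \to \sigma_\alpha^2 := \sup_M |\nabla\tilde u_\alpha|^2$, whence $\det \overline{\mathbf{G}}(t_k) = \prod_\alpha \mu_{\alpha,k}^2 \to \prod_\alpha \sigma_\alpha^2$. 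The pointwise Hadamard inequality $|\omega|^2 = \det \mathbf{G} \le \prod_\alpha |\nabla\tilde u_\alpha|^2$ yields $L^2 \le \prod_\alpha \sigma_\alpha^2$, and combined with $\Phi(t_k) \le L^2$ this forces $\Phi(t_k) \to L^2$. To upgrade the subsequential limit to the full limit, invoke Theorem~\ref{thm:main} together with the Li-Yau upper heat-kernel bound on $B(x_0, R\sqrt{t})$ for $R$ large and Bishop-Gromov volume comparison, which together yield $\liminf_{t\to\infty}\Phi(t) \ge L^2$. The chief obstacle is the non-linear $L^2$-to-$L^1$ passage from the Gram-matrix concentration to $\det\mathbf{G}$, which is exactly what the weighted Poincar\'e (Lemma~\ref{lem:Weight_Poincare}) combined with $L^1$-integrability of the splitting error from (\ref{eqn:splitting_error_intro}) accomplishes.
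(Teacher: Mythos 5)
Your overall skeleton matches the paper's (trivial upper bound; the heat-measure analogue of Li's identity, which is exactly Lemma~\ref{lem:heat_max}; $SO(n)$-diagonalization of the heat-averaged Gram matrix; weighted Poincar\'e plus Hessian smallness to commute determinant and averaging; Hadamard's inequality). But there is a genuine gap at the decisive point: you only prove a \emph{subsequential} statement. Extracting $t_k$ with $t_kG(t_k)\to 0$ from the $L^1$-integrability of $G$ produces one particular sequence, so your argument yields $\limsup_{t\to\infty}\Phi(t)=L^2$; since $|\omega|^2$ is not subharmonic, $\Phi(t)$ has no monotonicity and this does not give the full limit. Your proposed upgrade --- ``Theorem~\ref{thm:main} plus the Li-Yau upper bound and Bishop--Gromov'' --- does not work: transferring a ball-average statement to a heat-measure statement through the kernel bounds (\ref{eqn:kernel_estimate}) loses the multiplicative constants $C_1,C_2$ and the Gaussian factor, so at best it gives $\liminf\Phi\ge c\,L^2$ with $c<1$; an honest equivalence of the two averages is precisely (\ref{eqn:Li2}), which requires positive asymptotic volume ratio and fails in general (the Xu example cited in the introduction). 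This is exactly why Theorem~\ref{thm:second} needs a separate proof. The paper's fix, which you are missing, is structural: start from an \emph{arbitrary} sequence $t_i\to\infty$, and prove the Hessian decay for \emph{all} large times by combining the monotonicity of each weighted component energy (Lemma~\ref{lem:heat_max}) on the dyadic interval $[3t,4t]$ in (\ref{eqn:splitting_error_intro}), a mean-value choice of $\bar t\in[3t,4t]$, and the Harnack inequality (\ref{eqn:Harnack}) to move the estimate back to time $3t$; only then does the ``for every sequence there is a good subsequence'' argument give the full limit.

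A second, related gap is your claim that Lemma~\ref{lem:Weight_Poincare} applied entrywise gives $\int_M|\mathbf{G}-\overline{\mathbf{G}}(t_k)|^2\,\dmu_{x_0}(t_k)\le C\,t_kG(t_k)$ with a uniform constant. The lemma's right-hand side carries the factor $\frac{|B(x_0,R)|}{|B(x_0,\sqrt t)|}\,R$ \emph{and} the tail term $\Psi_{WP}(\sqrt t/R\,|\,m)\,L$: to kill the tail you must let $R/\sqrt t\to\infty$, which by Bishop--Gromov inflates the volume-ratio factor like $(R/\sqrt t)^m$, and the gradient integral is taken at time $3t$, not $t$. So the Hessian smallness must be coupled quantitatively to the ratio $R/\sqrt t$; this is the purpose of the paper's choice $R_j=j\sqrt{t_{i_j}}$ together with the index selection (\ref{eqn:weighted_hessian_control}) (the exponent $j^{-2m-3}$ is tuned so that $j^{m+1}$ times the square root of the Hessian bound still decays), producing the heat-measure analogue of (\ref{eqn:det_Omega_i}). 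As written, your uniform-constant inequality is false, and without the coupling of $R$, $t$, and the Hessian decay rate the determinant-commuting step does not close.
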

This theorem, in conjunction with Theorem~\ref{thm:main}, then says
\begin{align}
\lim_{\rho\to \infty}\fint_{B(x_0,\rho)}|\omega|^2\ \dvol_g\ =\
\sup_M|\omega|^2\ =\ \lim_{t\to \infty}\int_M|\omega|^2\ \dmu_{x_0}(t).
\label{eqn:identity}
\end{align} 

Notice that $|\omega|^2$ is \emph{not} necessarily sub-harmonic in any obvious
way (see Section 4 for more details), therefore we have to prove both sides of
(\ref{eqn:identity}) separately.
At this point, it is interesting to compare this identity with classical results
of Peter Li in~\cite{Li86}:
\begin{theorem}[Peter Li~\cite{Li86}]
 Suppose $M$ has positive asymptotic volume ratio at infinity, i.e. 
$\lim_{\rho\to \infty}\omega_m^{-1}\rho^{-m}Vol_g(B(x_0,\rho))\ =\ \kappa\ >0,$
then for any bounded function $f$,
\begin{align}
\ \lim_{\rho\to\infty}\fint_{B(x_0,\rho)}f\ \dvol_g\ =\ \lim_{t\to
\infty}\int_Mf\ \dmu_{x_0}(t),
\label{eqn:Li2}
\end{align}
as long as one of these limits exists.

On the other hand, if $f$ is a bounded, non-negative sub-harmonic function,
then regardless of the positivity of the asymptotic volume ratio at infinity,
\begin{align}
\lim_{\rho\to \infty}\fint_{B(x_0,\rho)}f\ \dvol_g\ =\ \sup_M f\quad\text{and}\quad
\lim_{t\to \infty}\int_M f\ \dmu_{x_0}(t)\ =\ \sup_M f.
\label{eqn:Li3}
\end{align}

\label{thm:Li1}
\end{theorem}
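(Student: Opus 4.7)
My plan is to split Theorem~\ref{thm:Li1} into its two halves and treat each using the Li--Yau two-sided Gaussian heat-kernel bounds (\cite{LiYau}) together with the Bishop--Gromov volume comparison. For \eqref{eqn:Li2}, the idea is to decompose
$$\int_M f\, \dmu_{x_0}(t)\ =\ \sum_{k\ge 0}\int_{A_k(t)} f\, H_{x_0}(\cdot,t)\, \dvol_g,\qquad A_k(t):=B(x_0,2^k\sqrt t)\setminus B(x_0,2^{k-1}\sqrt t)$$
(with $A_0(t):=B(x_0,\sqrt t)$), and to exploit that on $A_k(t)$ one has $H_{x_0}\asymp \Vol_g(B(x_0,\sqrt t))^{-1}\exp(-c\, 4^k)$ by Li--Yau. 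The positive asymptotic volume ratio together with Bishop--Gromov gives $\Vol_g(B(x_0,2^k\sqrt t))/\Vol_g(B(x_0,\sqrt t))\to 2^{km}$ as $t\to\infty$ for each fixed $k$, so writing $\fint_{A_k(t)} f$ as a weighted difference of $\fint_{B(x_0,2^k\sqrt t)} f$ and $\fint_{B(x_0,2^{k-1}\sqrt t)} f$, existence of the ball-average limit $L$ forces $\fint_{A_k(t)} f\to L$ for every $k$. Since the weights $2^{km}\exp(-c\, 4^k)$ are summable, a dominated-convergence argument in $k$ then yields that the heat-integral limit equals $L$; the converse implication is symmetric.

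For \eqref{eqn:Li3}, the starting observation is that for bounded non-negative subharmonic $f$,
$$\frac{\text{d}}{\text{d}t}\int_M f\, \dmu_{x_0}(t)\ =\ \int_M H_{x_0}(\cdot,t)\, \Lap f\, \dvol_g\ \ge\ 0,$$
and the same monotonicity holds with $x_0$ replaced by any $y\in M$. Setting $u(y,t):=\int_M f\, H_y(\cdot,t)\,\dvol_g$, the parabolic maximum principle gives $u(y,t)\ge f(y)$; boundedness and monotonicity produce a pointwise limit $u_\infty(y):=\lim_{t\to\infty} u(y,t)$. Writing the semigroup identity $u(y,t+s)=\int H_s(y,\cdot)\, u(\cdot,t)\, \dvol_g$ and passing $t\to\infty$ by dominated convergence yields $u_\infty(y)=\int H_s(y,\cdot)\, u_\infty\,\dvol_g$ for every $s>0$, so $u_\infty$ is harmonic. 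By Yau's Liouville theorem (\cite{Yau75}), $u_\infty\equiv c$ for some constant; since $c\ge f(y)$ for every $y$ but $c\le\sup_M f$ trivially, $c=\sup_M f$, yielding the heat-integral half. The ball-average half proceeds by picking $y\in M$ with $f(y)>\sup_M f-\epsilon$, applying the sub-mean-value inequality $f(y)\le\fint_{B(y,\rho)} f\,\dvol_g$ (valid for subharmonic $f$ under $\Rc\ge 0$), and comparing $\fint_{B(y,\rho)}$ to $\fint_{B(x_0,\rho+d(x_0,y))}$ via Bishop--Gromov, whose volume quotient tends to $1$ as $\rho\to\infty$.

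The main obstacle, in my view, is the rigidity encoded in \eqref{eqn:Li3}: without the positive asymptotic volume ratio one cannot invoke \eqref{eqn:Li2} to link the two averages, so the argument must route independently through Yau's Liouville theorem on the heat side and through a Bishop--Gromov translation on the volume side. Secondary technicalities---justifying differentiation under the integral, interchanging $\Lap$ with the integral, and exchanging limits in the semigroup identity for $u_\infty$---are routine consequences of the Gaussian decay of $H_{x_0}(\cdot,t)$ and the boundedness of $f$.
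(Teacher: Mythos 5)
Your treatment of the second identity in (\ref{eqn:Li3}) is essentially the paper's own argument for Lemma~\ref{lem:heat_max} (the only part of Theorem~\ref{thm:Li1} the paper proves rather than cites): monotonicity of $t\mapsto\int_M f\,\dmu_y(t)$ from subharmonicity, uniform boundedness, a harmonic limit function, and a Liouville-type conclusion. You obtain harmonicity of the limit via the semigroup identity and invoke Yau's theorem, while the paper uses the Li--Yau Harnack inequality for locally uniform convergence and then the Cheng--Yau gradient estimate; this difference is cosmetic. The other two assertions, however, are precisely the parts the paper does not reprove but imports from~\cite{Li86}, and your routes to both contain genuine gaps.

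For (\ref{eqn:Li2}): the bounds (\ref{eqn:kernel_estimate}) are two-sided only up to a multiplicative gap $C_1(\epsilon)C_2(\epsilon)>1$ (which blows up as $\epsilon\to0$), and on the annulus $A_k(t)$ the Gaussian factor itself varies by a factor of order $e^{c\,4^k}$, so $H_{x_0}(\cdot,t)$ is \emph{not} comparable to a constant on $A_k(t)$ with uniformly controlled ratio. Hence the key step --- replacing $\int_{A_k(t)}f\,H_{x_0}(\cdot,t)\,\dvol_g$ by $\bigl(\fint_{A_k(t)}f\bigr)\int_{A_k(t)}1\,\dmu_{x_0}(t)$ --- yields only two-sided comparability with a fixed constant greater than $1$, not equality of limits; and since $f$ minus its ball-average limit changes sign, comparability gives no control at all: the error depends on how $f$ is distributed within each annulus relative to the kernel. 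The equality (\ref{eqn:Li2}) requires the sharp large-time asymptotics of $|B(x_0,\sqrt{t})|\,H_{x_0}(y,t)$ under maximal volume growth, which is the main analytic theorem of~\cite{Li86} and does not follow from (\ref{eqn:kernel_estimate}). Your claimed ``symmetric'' converse (existence of the heat limit forcing existence and equality of the ball-average limit) is not symmetric either: ball averages cannot be reassembled from heat averages by your decomposition, and that direction again rests on the sharp kernel asymptotics together with an argument over radii, not on (\ref{eqn:kernel_estimate}) alone.

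For the first identity of (\ref{eqn:Li3}): the inequality $f(y)\le\fint_{B(y,\rho)}f\,\dvol_g$ with constant $1$ is a Euclidean fact; under $\Rc\ge 0$ the available mean value inequality (Li--Schoen) carries a dimensional constant $C(m)>1$, because monotonicity of spherical averages fails --- the covariance term between $f$ and $\Delta r$ on geodesic spheres has no sign. With a constant $C(m)>1$ your argument only produces $\liminf_{\rho\to\infty}\fint_{B(x_0,\rho)}f\ \ge\ C(m)^{-1}\sup_M f$, which is far from the identity; indeed, were the constant-$1$ sub-mean-value inequality available on $\Rc\ge0$ manifolds, this half of (\ref{eqn:Li3}) would be nearly trivial, whereas it is a substantive theorem of Li. So this step needs either a proof of the sharp mean value inequality you assert (which is not known in this generality) or the actual argument of~\cite{Li86}.
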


Notice that when $M$ has vanishing asymptotic volume ratio at infinity,
(\ref{eqn:Li2}) is not necessarily true for \emph{any} bounded function,
see~\cite{Xu14}. However it is not clear if (\ref{eqn:Li3}) \emph{only} holds
for sub-harmonic functions. Therefore, (\ref{eqn:identity}) provides another
incidence when (\ref{eqn:Li3}) stands for a function which is \emph{not}
necessarily sub-harmonic, \emph{regardless} of the positivity of the asymptotic
volume ratio at infinity. See Section 4 for a further discussion.

%%%%%%%%%%%%%%%%%%%%%%%%%%%%%%%%%%%%%%%%%%%%%%%%%%%%%%%%%%%%%%%%%%%%%%%%%%%%%%%%%%%%%%%%
%%%%%%%%%%%%%%%%%%%%%%%%%%%%%%%%%%%%%%%%%%%%%%%%%%%%%%%%%%%%%%%%%%%%%%%%%%%%%%%%%%%%%
\section{Background}
In this section we review the relevant facts needed for our future arguments:
Cheeger-Colding's segment inequality, and the Li-Yau heat kernel estimates.
We will also recall a result due Peter Li, the proof of which was embedded in
other results and here we single it out. Instead of citing the original
results in full generality, we will state the results in a form that suit our
future applications.

Given any Riemannian manifold with Ricci curvature uniformly bounded below, a
fundamental inequality which is directly built on the Bishop-Gromov volume
comparison is Cheeger-Colding's segment inequality~\cite{ChCo} (see
also~\cite{Cheeger_Millan}): 
\begin{proposition}[\textbf{Segment inequality}]
Let $(M^m,g)$ be a complete Riemannian manifold with $\Rc_g\ge 0g$. Let
$B(x_0,r)$ be a geodesic ball of radius $\rho>0$ around $x_0\in M$. For any
$f\in L^1_{loc}(M)$ we define
\begin{align*}
\mathcal{F}(x,y)\ :=\ \inf_{\gamma_{xy}}\int_{0}^{d(x,y)}f(\gamma_{xy}(t))\
\text{d}t,
\end{align*}
where the infimum is taken over all minimal geodesics $\gamma_{xy}$ connecting
$x$ and $y$. There is a dimensional constant $C_{CC}(m)>0$ such that for any
$f\in L^1_{loc}(M)$,
\begin{align*}
\int_{B(x_0,\rho)\times B(x_0,\rho)}\mathcal{F}(x,y)\ \dvol_g(x)\dvol_g(y)\ \le\
C_{CC}(m)|B(x_0,\rho)|(2\rho)\ \int_{B(x_0,2\rho)}f\ \dvol_g.
\end{align*} 
\label{prop:segment}
\end{proposition}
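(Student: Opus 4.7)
My plan is to reduce to $f\ge 0$ (by treating $f_\pm$ separately) and then split the defining integral of $\mathcal{F}(x,y)$ at the midpoint of a minimizing geodesic:
$$\int_0^{d(x,y)} f(\gamma_{xy}(t))\,dt \;=\; \int_0^{d(x,y)/2} f(\gamma_{yx}(t))\,dt \;+\; \int_0^{d(x,y)/2} f(\gamma_{xy}(t))\,dt.$$
Since the double integration over $B(x_0,\rho)\times B(x_0,\rho)$ is symmetric under $(x,y)\mapsto(y,x)$, the two halves contribute equally, reducing the problem to bounding twice the first.

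For the core estimate, fix $y\in B(x_0,\rho)$ and pass to normal polar coordinates at $y$: writing $x=\Exp_y(s\theta)$ with $\theta\in S^{m-1}_y$, $s=d(x,y)\le 2\rho$, and $\dvol_g(x)=\mathcal{J}_y(s,\theta)\,ds\,d\theta$ (valid away from the measure-zero cut locus), the radial minimizing geodesic is $\gamma_{yx}(t)=\Exp_y(t\theta)$. Letting $R_y(\theta)\le 2\rho$ denote the largest $s$ with $\Exp_y(s\theta)\in B(x_0,\rho)$, a Fubini interchange of the $s$- and $t$-integrations rewrites the $x$-integration as
$$\int_{S^{m-1}_y}\!d\theta \int_0^{R_y(\theta)/2}\!f(\Exp_y(t\theta))\!\left[\int_{2t}^{R_y(\theta)}\!\mathcal{J}_y(s,\theta)\,ds\right]dt.$$
The key geometric input is the Bishop-Gromov monotonicity theorem: under $\Rc\ge 0$, $s\mapsto \mathcal{J}_y(s,\theta)/s^{m-1}$ is non-increasing, yielding $\mathcal{J}_y(s,\theta)\le(s/t)^{m-1}\mathcal{J}_y(t,\theta)$ for $s\ge t$, which controls the bracketed $s$-integral in terms of $\mathcal{J}_y(t,\theta)$ and $\rho$.

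Reverting to a volume integral via $\dvol_g(z)=\mathcal{J}_y(t,\theta)\,dt\,d\theta$ with $z=\Exp_y(t\theta)$, integrating over $y\in B(x_0,\rho)$, and swapping the orders of the resulting $(y,z)$-integrations — noting that each midpoint $z$ lies in $B(x_0,2\rho)$ by the triangle inequality — yields an intermediate estimate of the form $C(m)\rho^m\int_{B(x_0,2\rho)} f(z)\int_{B(x_0,\rho)} d(y,z)^{1-m}\,\dvol_g(y)\,\dvol_g(z)$. The main delicate point — and what distinguishes the stated sharp bound $C_{CC}(m)|B(x_0,\rho)|(2\rho)$ from the coarser estimate $C(m)\rho^{m+1}$ — is extracting the volume factor $|B(x_0,\rho)|$ cleanly from this inner Riesz-type integral. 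This refinement combines the monotonicity of $r\mapsto |B(z,r)|/r^m$ with the Bishop-Gromov doubling property, through a layer-cake decomposition adapted to the volume growth around $z$; the cut-locus issues throughout are standard and inconsequential, since the cut locus has measure zero.
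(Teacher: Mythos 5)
The paper does not actually prove this proposition -- it is quoted as background, with a citation to Cheeger--Colding and to Cheeger's Lezioni Fermiane notes -- so your proposal must be measured against the standard proof there, whose outline (reduce to $f\ge 0$, split the geodesic at its midpoint, pass to polar coordinates at an endpoint, use the Bishop--Gromov Jacobian comparison, then Fubini) you have correctly identified. The gap is in how you pair the two halves with the two endpoints. In the standard argument, the half of the geodesic \emph{near} $y$ is estimated by writing $y=\Exp_x(s\theta)$ in polar coordinates centered at the \emph{other} endpoint $x$ and integrating over $y$: the sampled points are $\Exp_x(t\theta)$ with $t\in[s/2,s]$, so the area-element comparison under $\Rc_g\ge 0$ gives $\mathcal J_x(s,\theta)\le (s/t)^{m-1}\mathcal J_x(t,\theta)\le 2^{m-1}\mathcal J_x(t,\theta)$, a \emph{dimensional} constant; after Fubini the $s$-interval has length at most $2\rho$, and the factor $|B(x_0,\rho)|$ then comes for free from the trivial outer integration over the polar center $x$. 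You did the opposite: you center polar coordinates at $y$ and estimate the half near $y$, so $t\in[0,s/2]$ and the Jacobian ratio $(s/t)^{m-1}$ is unbounded as $t\to 0$. This is exactly what produces your lossy intermediate bound $C(m)\rho^m\int_{B(x_0,2\rho)}f(z)\int_{B(x_0,\rho)}d(y,z)^{1-m}\,\dvol_g(y)\,\dvol_g(z)$.

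The ``delicate point'' you defer cannot be resolved: to recover $C(m)|B(x_0,\rho)|\rho\int_{B(x_0,2\rho)}f$ from that expression (with $f\ge 0$ arbitrary) you would need the pointwise bound $\int_{B(x_0,\rho)}d(y,z)^{1-m}\,\dvol_g(y)\le C(m)|B(x_0,\rho)|\,\rho^{1-m}$, and this is false under $\Rc_g\ge 0$. On a flat cylinder $S^1_\epsilon\times\R^{m-1}$ with $\rho$ large and $z$ near $x_0$, the left side is comparable to $\epsilon\log(\rho/\epsilon)$ while the right side is comparable to $\epsilon$; taking $f$ to be the indicator of a small ball about such a $z$ shows your intermediate quantity genuinely exceeds the claimed bound by an unbounded (logarithmic) factor, so no layer-cake or doubling refinement can close it. (What Bishop's inequality does give from your expression is the coarse bound $C(m)\rho^{m+1}\int f$, which is strictly weaker precisely when the volume growth is non-maximal -- the regime the sharp statement is designed for, e.g.\ in the weighted Poincar\'e inequality of Lemma~\ref{lem:Weight_Poincare}.) The repair is simply to swap the pairing as above; the rest of your argument (midpoint split, $t\le s/2\le\rho$ forcing the sampled points into $B(x_0,2\rho)$, cut-locus sets being negligible) is sound, and yields the stated inequality with $C_{CC}(m)\approx 2^m$.
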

This inequality is useful in extracting estimates along \emph{most} geodesics
connecting pairs of points, see for instance~\cite{ChCo} and~\cite{CoNa11}. It
also has the Poincar\'e inequality as a natural consequence (see~\cite{SC92}
and~\cite{Cheeger_Millan}):
\begin{proposition}[\textbf{Poincar\'e inequality}]
Assume $(M,g)$ is a complete Riemannian manifold with non-negative Ricci
curvature. There is a dimensional constant $C_P(m)>0$ such that for any $f\in
W^{1,1}_{loc}(M)$,
\begin{align}
\fint_{B(x_0,\rho)}\left|f-\fint_{B(x_0,\rho)}f\ \dvol_g\right|\ \dvol_g\ \le\
C_P(m)\rho \fint_{B(x_0,2\rho)}|\nabla f|\ \dvol_g.
\label{eqn:Poincare}
\end{align}
\label{prop:Poincare}
\end{proposition}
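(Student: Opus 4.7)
The plan is to derive the Poincar\'e inequality as a direct consequence of the segment inequality (Proposition~\ref{prop:segment}), by the standard ``double integral" trick in which the mean oscillation is bounded by pairwise differences along minimizing geodesics.

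First, I would estimate the left-hand side by a double integral: since
\bal
\left|f(x)-\fint_{B(x_0,\rho)}f\ \dvol_g\right|\ =\
\left|\fint_{B(x_0,\rho)}\bigl(f(x)-f(y)\bigr)\ \dvol_g(y)\right|\ \le\
\fint_{B(x_0,\rho)}|f(x)-f(y)|\ \dvol_g(y),
\eal
integrating in $x$ over $B(x_0,\rho)$ and normalizing gives
\bal
\fint_{B(x_0,\rho)}\left|f-\fint_{B(x_0,\rho)}f\ \dvol_g\right|\ \dvol_g\ \le\
\frac{1}{|B(x_0,\rho)|^2}\int_{B(x_0,\rho)\times B(x_0,\rho)}|f(x)-f(y)|\ \dvol_g(x)\dvol_g(y).
\eal

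Next, by first approximating $f$ by smooth functions (in $W^{1,1}_{loc}$), for any pair $x,y$ and any minimal geodesic $\gamma_{xy}$ from $x$ to $y$ I would apply the fundamental theorem of calculus along $\gamma_{xy}$, yielding $|f(x)-f(y)|\le \int_0^{d(x,y)}|\nabla f|(\gamma_{xy}(t))\ \text{d}t$. Taking the infimum over all such minimal geodesics and defining $\mathcal{F}(x,y)$ from $|\nabla f|$ as in Proposition~\ref{prop:segment}, one obtains $|f(x)-f(y)|\le \mathcal{F}(x,y)$ for a.e.\ $(x,y)$, and then passes back to the $W^{1,1}_{loc}$ setting by density.

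Applying the segment inequality to $|\nabla f|$ therefore gives
\bal
\int_{B(x_0,\rho)\times B(x_0,\rho)}|f(x)-f(y)|\ \dvol_g(x)\dvol_g(y)\ \le\
2C_{CC}(m)\,\rho\,|B(x_0,\rho)|\int_{B(x_0,2\rho)}|\nabla f|\ \dvol_g,
\eal
and dividing by $|B(x_0,\rho)|^2$ yields
\bal
\fint_{B(x_0,\rho)}\left|f-\fint_{B(x_0,\rho)}f\ \dvol_g\right|\ \dvol_g\ \le\
2C_{CC}(m)\,\rho\,\frac{|B(x_0,2\rho)|}{|B(x_0,\rho)|}\fint_{B(x_0,2\rho)}|\nabla f|\ \dvol_g.
\eal
Finally I would absorb the doubling ratio $|B(x_0,2\rho)|/|B(x_0,\rho)|\le 2^m$ coming from Bishop-Gromov (since $\Rc_g\ge 0$) into the constant, setting $C_P(m):=2^{m+1}C_{CC}(m)$. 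The only mild subtlety is the approximation step justifying the geodesic fundamental theorem of calculus for $W^{1,1}_{loc}$ functions, which is handled by mollification together with the fact that $|\nabla f|\in L^1_{loc}$ controls the integrals along $C_{CC}$-almost every geodesic; everything else is a mechanical application of Proposition~\ref{prop:segment}.
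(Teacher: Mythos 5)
Your derivation is correct and follows exactly the route the paper indicates: the paper does not write out a proof but presents the Poincar\'e inequality as a direct consequence of the segment inequality (Proposition~\ref{prop:segment}), which is precisely your argument of bounding the mean oscillation by the double integral of $|f(x)-f(y)|$, invoking the fundamental theorem of calculus along minimizing geodesics, and absorbing the Bishop--Gromov doubling ratio $|B(x_0,2\rho)|/|B(x_0,\rho)|\le 2^m$ into the constant. The approximation step for $f\in W^{1,1}_{loc}$ that you flag is the only technical point, and your treatment of it is the standard and adequate one.
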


In fact, with the help of the segment inequality, we will later prove a version
of the Poincar\'e inequality (Lemma~\ref{lem:Weight_Poincare}) with the heat
measure $\dmu_{x_0}$ replacing the volume form $\dvol_g$. To set up the weighted
Poincar\'e inequality, the heat kernel estimates due to
Li-Yau~\cite{LiYau} is also of crucial importance:
\begin{proposition}[\textbf{Heat kernel bounds}]
Let $(M,g)$ be a complete, non-compact Riemannian manifold with non-negative
Ricci curvature. Then the fundamental solution $H_{x_0}(x,t)$ to the heat
equation satisfies
\begin{align}
\frac{C_1(\varepsilon)^{-1}}{|B(x_0,\sqrt{t})|}
e^{-\frac{r^2(x)}{(4-\varepsilon)t}}\  \le\ H_{x_0}(x,t)\ \le\
\frac{C_2(\varepsilon)}{|B(x_0,\sqrt{t})|}e^{-\frac{r^2(x)}{(4+\varepsilon)t}},
\label{eqn:kernel_estimate}
\end{align}
where the positive constants $C_1(\varepsilon),C_2(\varepsilon)\to \infty$ as
$\varepsilon\to 0$.
\label{prop:kernel_estimate}
\end{proposition}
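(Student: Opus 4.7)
The plan is to follow the classical Li-Yau strategy~\cite{LiYau}: first prove a differential Harnack inequality for positive solutions of the heat equation using the Bochner formula, then integrate it to obtain pointwise comparisons between space-time values, and finally convert these comparisons into Gaussian bounds via $\int_M H_{x_0}(y,t)\, \dvol_g = 1$ and Bishop-Gromov doubling.

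First I would establish the Li-Yau gradient estimate: for any positive solution $u$ of $\partial_t u = \Delta u$ on $(M,g)$ with $\Rc_g \ge 0$ and any $\alpha > 1$,
\begin{align*}
\frac{|\nabla u|^2}{u^2} - \alpha \frac{\partial_t u}{u}\ \le\ \frac{m\alpha^2}{2t}.
\end{align*}
This comes from the Bochner identity applied to $\log u$ (where $\Rc_g \ge 0$ absorbs the curvature term) combined with the parabolic maximum principle on $t(|\nabla \log u|^2 - \alpha \partial_t \log u)$, localized by a Cheng-Yau cutoff built from $r(x)$. Integrating this estimate along a space-time path from $(x_1,t_1)$ to $(x_2,t_2)$ with $t_1 < t_2$ and optimizing the path speed by Cauchy-Schwarz yields the integrated Harnack inequality
\begin{align*}
u(x_1,t_1)\ \le\ u(x_2,t_2)\left(\frac{t_2}{t_1}\right)^{m\alpha/2}\exp\left(\frac{\alpha\, d(x_1,x_2)^2}{4(t_2-t_1)}\right).
\end{align*}

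Next I would derive the on-diagonal bound. Applying the Harnack with $x_1 = x_0$, $t_1 = t/2$, $t_2 = t$, and $x_2 = y \in B(x_0,\sqrt{t})$ gives $H_{x_0}(y,t) \ge c_\alpha H_{x_0}(x_0,t/2)$ throughout $B(x_0,\sqrt{t})$; integrating against $\int_M H_{x_0}(y,t)\,\dvol_g = 1$ and invoking Bishop-Gromov doubling produces $H_{x_0}(x_0,t) \le C_\alpha/|B(x_0,\sqrt{t})|$. To convert the on-diagonal bound into the Gaussian upper bound in~(\ref{eqn:kernel_estimate}), I would use the Davies method: differentiate $I(t) := \int_M H_{x_0}(y,t)^2 e^{-\psi(y,t)}\,\dvol_g$ for $\psi(y,t) := r(y)^2/((4+\varepsilon)t)$, verify $|\nabla\psi|^2 + 2\partial_t\psi \le 0$, and conclude monotonicity of $I(t)$, which combined with the on-diagonal control yields the Gaussian bound. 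The matching Gaussian lower bound follows by applying the Harnack inequality in the forward time direction from the on-diagonal lower bound (itself obtained by an analogous integration argument combined with the upper bound).

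The main obstacle is recovering the sharp constant $4$ in the Gaussian exponent. The Harnack carries a Gaussian factor with coefficient $\alpha/4$ in its exponent, and only $\alpha \to 1^+$ can approach the Euclidean value; but the gradient-estimate prefactor $m\alpha^2/(2t)$, the constants accumulated from integrating along the Harnack path, and the Davies cutoff all deteriorate in this limit. Tracking this trade-off carefully is exactly what produces the dependence $C_1(\varepsilon), C_2(\varepsilon) \to \infty$ as $\varepsilon \to 0$ recorded in the statement.
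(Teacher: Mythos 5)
The first thing to note is that the paper does not prove Proposition~\ref{prop:kernel_estimate} at all: it is quoted as background from Li--Yau \cite{LiYau}, so your proposal has to be measured against the classical argument rather than against anything internal to the paper. Most of your outline does follow that argument faithfully: the localized gradient estimate, its integration along space-time paths to the Harnack inequality (your formula with $\alpha=3/2$ is exactly the paper's Proposition~\ref{prop:Harnack}), the on-diagonal upper bound from Harnack together with $\int_M H_{x_0}(\cdot,t)\,\dvol_g=1$ and Bishop--Gromov, and the lower-bound chain (on-diagonal lower bound from the upper bound plus volume comparison, then forward Harnack, with $\alpha\to1$ and the time gap accounting for $C_1(\varepsilon)\to\infty$) are all correct and are the standard route.

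The genuine gap is in the Davies step that is supposed to convert the on-diagonal bound into the Gaussian upper bound in (\ref{eqn:kernel_estimate}); as written it does not close. With the weight $e^{-\psi}$, $\psi(y,t)=r(y)^2/((4+\varepsilon)t)$, differentiating and using Cauchy--Schwarz gives $\frac{d}{dt}I(t)\le\int_M H_{x_0}^2\bigl(\tfrac12|\nabla\psi|^2-\partial_t\psi\bigr)e^{-\psi}\,\dvol_g$, so monotonicity would need $|\nabla\psi|^2\le 2\,\partial_t\psi$, which is impossible since $\partial_t\psi<0$; the condition you state, $|\nabla\psi|^2+2\,\partial_t\psi\le0$, is the right one for the weight $e^{+\psi}$. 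But even after fixing the sign, $I(t)=\int_M H_{x_0}^2e^{+\psi}\,\dvol_g$ has no finite initial value to propagate: already $\int_M H_{x_0}(\cdot,t)^2\,\dvol_g=H_{x_0}(x_0,2t)\to\infty$ as $t\to0^+$, so ``monotonicity plus on-diagonal control'' by itself yields nothing. The standard repair is to anchor the weight at a positive time and a ball, e.g. $\psi(y,t)=d\bigl(y,B(x_0,\rho)\bigr)^2/\bigl((4+\varepsilon)(t-s)\bigr)$, start the monotone quantity at $t=s$ where the on-diagonal bound controls $\int_{B(x_0,\rho)}H_{x_0}^2$, and recombine over an annulus decomposition via the semigroup property (Grigor'yan/Davies), or equivalently to use Li--Yau's original weighted $L^2$ lemma, which has exactly this anchored form; in either case the integrations by parts on the noncompact $M$ need justification by an exhaustion argument. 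With that step replaced, your plan does reproduce \cite{LiYau}, and the trade-off you describe between $\alpha\to1$ and the blow-up of $C_1(\varepsilon),C_2(\varepsilon)$ is the right bookkeeping.
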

Moreover, we will need the following Harnack inequality for heat kernels,
which is also proved in~\cite{LiYau}:
\begin{proposition}[\textbf{Harnack inequality for positive heat equation
solutions}] Let $w(x,t)$ be a positive solution to the heat equation, then for
$x,y$ and $t_1<t_2$ we have
\begin{align}
w(x,t_1)\ \le\
w(y,t_2)\left(
\frac{t_2}{t_1}\right)^{\frac{3m}{4}}\exp\left(\frac{3d(x,y)^2}{8(t_2-t_1)}\right).
\label{eqn:Harnack1}
\end{align}
\label{prop:Harnack}
\end{proposition}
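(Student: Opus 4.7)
The plan is to derive (\ref{eqn:Harnack1}) from the Li-Yau differential Harnack estimate for $\log w$, followed by integration along a space-time path connecting $(x,t_1)$ to $(y,t_2)$.

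First I would set $u:=\log w$, which satisfies $u_t = \Delta u + |\nabla u|^2$. Combining the Bochner identity $\tfrac12\Delta|\nabla u|^2 = |\nabla\nabla u|^2 + \langle\nabla u,\nabla\Delta u\rangle + \Rc(\nabla u,\nabla u)$ with the hypothesis $\Rc\ge 0$ and the Cauchy-Schwarz bound $|\nabla\nabla u|^2\ge (\Delta u)^2/m$, I would set up an evolution inequality for $F := t(\alpha|\nabla u|^2 - u_t)$, where $\alpha > 1$ is a fixed parameter. A maximum-principle argument on a parabolic cylinder $B(x_0,R)\times[0,T]$, with a spatial cutoff whose Laplacian is controlled by Laplacian comparison, followed by sending $R\to\infty$, would then yield the Li-Yau differential Harnack inequality
\begin{align*}
\alpha|\nabla u|^2 - u_t\ \le\ \frac{m\alpha^2}{2t}.
\end{align*}

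Next, given $x,y\in M$ and $0<t_1<t_2$, I would take a minimizing geodesic $\gamma:[0,1]\to M$ from $x$ to $y$ parametrized so that $|\gamma'|\equiv d(x,y)$, together with the space-time path $\eta(s):=(\gamma(s),\tau(s))$ with $\tau(s):=t_1+s(t_2-t_1)$. The fundamental theorem of calculus gives
\begin{align*}
\log\frac{w(y,t_2)}{w(x,t_1)}\ =\ \int_0^1\bigl(\langle\nabla u,\gamma'\rangle + (t_2-t_1)\,u_t\bigr)\,\text{d}s.
\end{align*}
Applying Young's inequality $\langle\nabla u,\gamma'\rangle\ge -a|\nabla u|^2 - \tfrac{d(x,y)^2}{4a}$ pointwise with a suitable $a > 0$, grouping the integrand as $-(t_2-t_1)\bigl(\tfrac{a}{t_2-t_1}|\nabla u|^2 - u_t\bigr) - \tfrac{d(x,y)^2}{4a}$, and invoking the differential Harnack (which, since $|\nabla u|^2 \ge 0$, yields $\tfrac{a}{t_2-t_1}|\nabla u|^2 - u_t\le\tfrac{m\alpha^2}{2\tau}$ whenever $\tfrac{a}{t_2-t_1}\le \alpha$), I would compute $\int_{t_1}^{t_2}\tau^{-1}\text{d}\tau = \log(t_2/t_1)$ and exponentiate. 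The stated exponents $\tfrac{3m}{4}$ and $\tfrac{3}{8}$ correspond to the choice $\alpha=\sqrt{3/2}$ and $a = \tfrac{2(t_2-t_1)}{3}$, since then $\tfrac{m\alpha^2}{2} = \tfrac{3m}{4}$ and $\tfrac{1}{4a}(t_2-t_1) = \tfrac{3}{8}$ with $\tfrac{a}{t_2-t_1} = \tfrac{2}{3} \le \sqrt{3/2} = \alpha$.

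The main obstacle is the derivation of the differential Harnack in the first step: the parabolic maximum-principle argument requires careful bookkeeping of the cutoff terms ($|\nabla\phi|^2/\phi$ and $\Delta\phi$, both controlled via the Laplacian comparison under $\Rc\ge 0$) and absorption of the cross terms produced by Bochner against $|\nabla\nabla u|^2$. Since the authors quote this estimate directly from Li-Yau~\cite{LiYau}, in practice one invokes their computation rather than reproducing it.
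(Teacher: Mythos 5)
The paper gives no proof of this proposition at all---it is quoted directly from Li--Yau---so you are in effect reconstructing their argument, and your overall strategy (differential Harnack estimate for $\log w$, then integration along a space-time path) is exactly the right one. However, the differential Harnack inequality you state is wrong as written: the parameter $\alpha>1$ must sit on the $u_t$ term, not on the gradient term. The correct Li--Yau estimate under $\Rc\ge 0$ is $|\nabla u|^2-\alpha u_t\le \frac{m\alpha^2}{2t}$, whereas your inequality $\alpha|\nabla u|^2-u_t\le\frac{m\alpha^2}{2t}$ with $\alpha=\sqrt{3/2}>1$ is false: for the Euclidean heat kernel $w=(4\pi t)^{-m/2}e^{-|x|^2/4t}$ one computes $\alpha|\nabla u|^2-u_t=(\alpha-1)\frac{|x|^2}{4t^2}+\frac{m}{2t}$, which is unbounded in $x$ for any fixed $t$ once $\alpha>1$, so no bound of the form $C/t$ can hold. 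Correspondingly, the parabolic maximum-principle argument you sketch has to be run for $F=t\left(|\nabla u|^2-\alpha u_t\right)$; with $\alpha$ placed on the gradient term it does not close, so the quantity you propose cannot be estimated.

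The error is only in the bookkeeping, not in the method. With the correct estimate, i.e. $u_t\ge \frac{1}{\alpha}|\nabla u|^2-\frac{m\alpha}{2t}$, choose $a=\frac{t_2-t_1}{\alpha}$ in your Young inequality so that the $|\nabla u|^2$ terms cancel exactly along the path; the integration then gives
\begin{align*}
\log\frac{w(y,t_2)}{w(x,t_1)}\ \ge\ -\frac{m\alpha}{2}\log\frac{t_2}{t_1}-\frac{\alpha\, d(x,y)^2}{4(t_2-t_1)},
\end{align*}
that is, $w(x,t_1)\le w(y,t_2)\left(\frac{t_2}{t_1}\right)^{\frac{m\alpha}{2}}\exp\left(\frac{\alpha\, d(x,y)^2}{4(t_2-t_1)}\right)$. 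Taking $\alpha=\frac{3}{2}$ (so $a=\frac{2(t_2-t_1)}{3}$, the same $a$ you chose) yields exactly the exponents $\frac{3m}{4}$ and $\frac{3}{8}$ in (\ref{eqn:Harnack1}). Note that the exponent on $t_2/t_1$ is $\frac{m\alpha}{2}$, not $\frac{m\alpha^2}{2}$; your choice $\alpha=\sqrt{3/2}$ was forced by the misplaced parameter and should be replaced accordingly.
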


As a consequence of the above Harnack inequality, we have the following lemma
due to Peter Li~\cite{Li86}, see the second identity of (\ref{eqn:Li3}):
\begin{lemma}
Let $u$ be a bounded, non-negative sub-harmonic function defined on a complete
non-compact Riemannian manifold $(M,g)$ with non-negative Ricci curvature. Let
$\dmu_{x}$ be the heat measure based at $x\in M$, then
\begin{align}
\lim_{t\to \infty}\int_Mu\ \dmu_{x}(t)\ =\ \sup_Mu.
\label{eqn:heat_max}
\end{align}
\label{lem:heat_max}
\end{lemma}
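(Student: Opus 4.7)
The plan is to use the Li--Yau Harnack inequality (Proposition~\ref{prop:Harnack}) to transport an approximate-maximum value of $u$ to the heat average based at the fixed point $x$. Set $S := \sup_M u < \infty$, assume $u \not\equiv 0$ (the case $u\equiv 0$ is trivial), and define $w(y,t) := \int_M u \, \dmu_y(t)$, so that the desired identity becomes $\lim_{t\to\infty} w(x,t) = S$.

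First I would record three standing properties of $w$: it is a positive solution of the heat equation on $M\times(0,\infty)$; it satisfies the global bound $w(y,t) \le S$, since the heat kernel integrates to one; and, crucially, $w(y,t) \ge u(y)$ for every $t>0$. The last property is the only place sub-harmonicity enters: differentiating under the integral gives $\partial_t w(y,t) = \int_M \Delta u \, \dmu_y(t) \ge 0$, while $w(y,\cdot) \to u(y)$ as $t\to 0^+$ by continuity of $u$. Specializing to $y=x$, the upper bound and the monotonicity in $t$ guarantee that $\lim_{t\to\infty} w(x,t)$ exists and is at most $S$; the remaining task is the matching lower bound.

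Fix $\epsilon\in(0,1)$ and pick a sequence $y_k \in M$ with $u(y_k) \to S$. Applying Proposition~\ref{prop:Harnack} to the positive heat solution $w$ with $t_1 = (1-\epsilon)t$ and $t_2 = t$ gives
\begin{align*}
u(y_k) \;\le\; w\bigl(y_k,(1-\epsilon)t\bigr) \;\le\; w(x,t)\,(1-\epsilon)^{-3m/4}\exp\!\left(\frac{3\, d(x,y_k)^2}{8\epsilon t}\right).
\end{align*}
Choose $t_k \to \infty$ so that $d(x,y_k)^2/t_k \to 0$ (for instance $t_k := k(1+d(x,y_k)^2)$), pass to the limit in $k$ with $t = t_k$, and use the monotonicity of $w(x,\cdot)$ to obtain $S \le (1-\epsilon)^{-3m/4}\lim_{t\to\infty} w(x,t)$. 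Sending $\epsilon \to 0$ closes the argument.

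The main subtlety lies in the joint choice of $y_k$ and $t_k$: sub-harmonicity anchors the lower bound $w(y_k,s) \ge u(y_k)$ for \emph{any} auxiliary time $s$, but the exponential factor in the Harnack inequality forces $t_k$ to grow strictly faster than $d(x,y_k)^2$. Once this balance is struck, the Harnack estimate delivers an essentially sharp transfer of the near-extremal values of $u$ into the long-time heat average at $x$.
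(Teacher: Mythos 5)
Your proof is correct, and it closes the argument by a genuinely different mechanism than the paper. Both proofs share the same setup: monotonicity of $t\mapsto \int_M u\,\dmu_y(t)$ from sub-harmonicity, the bounds $u(y)\le \int_M u\,\dmu_y(t)\le \sup_M u$ from stochastic completeness, and the Li--Yau Harnack inequality. But the paper uses the Harnack inequality only qualitatively, to show (via a compact exhaustion and a diagonal argument) that $Hu(\cdot,t)$ converges locally uniformly as $t\to\infty$ to a bounded \emph{harmonic} limit $Hu(\cdot,\infty)\ge u$, and then invokes the Cheng--Yau gradient estimate as a Liouville theorem to identify this limit with the constant $\sup_M u$. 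You instead use the Harnack inequality quantitatively at a single base point: you transport the near-extremal values $u(y_k)\le w\bigl(y_k,(1-\epsilon)t_k\bigr)$ to $w(x,t_k)$ along times $t_k$ chosen to grow faster than $d(x,y_k)^2$, so the exponential factor tends to $1$, and then send $\epsilon\to 0$ to kill the $(1-\epsilon)^{-3m/4}$ factor. What each buys: your route is more elementary and self-contained --- no limit function, no harmonicity of the limit, no Cheng--Yau Liouville step, no diagonal argument --- and it makes explicit the balance between $d(x,y_k)^2$ and $t_k$ that drives the transfer; the paper's route yields slightly more structure (uniform-on-compacta convergence of $Hu(\cdot,t)$ to the constant $\sup_M u$, i.e. the statement at all base points simultaneously, in the spirit of Li's original argument), though that extra information is not needed for the lemma as stated. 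Both proofs rely on the same unstated-but-standard facts (that $w$ is a positive solution of the heat equation, differentiation under the integral and integration by parts as justified in Li's $L^1$-uniqueness paper, and $w(y,t)\to u(y)$ as $t\to 0^+$), so the level of rigor is comparable.
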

\noindent The proof of this lemma is contained in the proof of Proposition 2
in~\cite{Li86}. We include it here for the sake of completeness.
\begin{proof}
For any $x\in M$ and $t>0$ we define $Hu(x,t):=\int_Mu\ \dmu_x(t)$. By the
sub-harmonicity of $u$ we see that $Hu(x,t)$ is non-decreasing in $t$:
\begin{align*}
\forall x\in M,\ \forall t>0,\quad \partial_tHu(x,t)\ =\ \int_M\Delta u\
\dmu_x(t)\ \ge\ 0. 
\end{align*}
Especially $Hu(x,t)\ge u(x)$ for any $x\in M$ and any $t>0$. 
Moreover, the positivity of $H_x(y,t)$ (the fundamental solution to the heat
equation based at $x\in M$) and the stochastic completeness of $\dmu_x(t)$
ensure that 
\begin{align*}
\forall x\in M,\ \forall t>0,\quad 0\ \le\ Hu(x,t)\ \le\ \sup_Mu.
\end{align*}

With the help of (\ref{eqn:Harnack1}), we see that as $t\to \infty$, $Hu(-,t)$
converges to a function uniformly on compact subsets of $M$. Choosing a compact
exhaustion of $M$ and using a diagonal argument of choosing sub-sequences, we
conclude that as $t\to \infty$, $Hu(-,t)$ converges to some globally defined
function $Hu(-,\infty)$, uniformly on compact subsets of $M$.

Clearly $0\le Hu(-,\infty) \le \sup_Mu$, and in fact $Hu(-,\infty)$ is harmonic. 
Therefore, by the Cheng-Yau gradient estimate~\cite{ChengYau}, $Hu(-,\infty)$
must be a constant, which must be $\sup_Mu$, since $H(x,\infty)\ge u(x)$ for
any $x\in M$.
\end{proof}

%%%%%%%%%%%%%%%%%%%%%%%%%%%%%%%%%%%%%%%%%%%%%%%%%%%%%%%%%%%%%%%%%%%%%%%%%%
%%%%%%%%%%%%%%%%%%%%%%%%%%%%%%%%%%%%%%%%%%%%%%%%%%%%%%%%%%%%%%%%%%%%%%%%%%%%%%%%%%%%

%%%%%%%%%%%%%%%%%%%%%%%%%%%%%%%%%%%%%%%%%%%%%%%%%%%%%%%%%%%%%%%%%%%%%%%%%%%%%%%%%%%%%%%%%%%%%%%%%%%%%%
%%%%%%%%%%%%%%%%%%%%%%%%%%%%%%%%%%%%%%%%%%%%%%%%%%%%%%%%%%%%%%%%%%%%%%%%%%%%%%%%%%%%%%%%%%%%%%%%%%%%% 

\section{The strong maximum principle}
In this section we prove the strong maximum principle, Theorem~\ref{thm:main}.
Consider the functions $E_{\alpha\beta}(\mathbf{u}):=\langle \nabla
u_{\alpha},\nabla u_{\beta}\rangle$ on $M$ ($\alpha,\beta=1,\cdots,n$), and the
$n\times n$-matrix valued function
$\E(\mathbf{u}):=[E_{\alpha\beta}(\mathbf{u})]$, which is positive
semi-definite throughout $M$.
For each $\rho>0$, we could also consider the average of $\E(\mathbf{u})$ over
$B(x_0,\rho)$, a positive semi-definite numerical matrix:
\begin{align*}
\Omega_{\rho}(\mathbf{u})\ :=\
\left[\fint_{B(x_0,\rho)}E_{\alpha\beta}(\mathbf{u})\
\dvol_g\right],
\end{align*}
 
 Since $|\omega|^2=\det \E(\mathbf{u})$, the theme of the proof is the interplay
 between the quantities $\fint_{B(x_0,\rho)}\det\E(\mathbf{u})\ \dvol_g$ and
 $\det \Omega_{\rho}(\mathbf{u})$, which are obtained by taking determinant and
 then average of $\E(\mathbf{u})$, or in the alternative order.

 Pick a sequence of scales $\rho_i$ that tend to infinity. By Li's identity
 (\ref{eqn:Li1}), the adjusted harmonic maps (adjustments made so that
 $\Omega_{\rho_i}$ becomes diagonal) will see $\det \Omega_{\rho_i}$
 approaching $\sup_M|\omega|^2$, since $\det \Omega_{\rho_i}$ and $|\omega|^2$
 are invariant under the induced $SO(n)$-actions, and the diagonalization
 enables us to deal with the determinants in a similar way as scalar functions.
 
  On the other hand, following an argument in~\cite{CCM}, the $L^2$-average of 
  Hessian could be shown to approach zero, therefore, by the Poincar\'e
  inequality (\ref{eqn:Poincare}), the process of taking determinant and taking
  average of $\E(\mathbf{u})$ on larger and larger scales will gradually commute.
 
 The last piece is the compactness of $SO(n)$, from which we could obtain
 certain limiting adjustment by a special orthogonal matrix that works for the
 asymptotic behavior.

The proof of the rigidity part follows from a direct application of the heat
measure, as outlined in the introduction. We also need Lemma~\ref{lem:heat_max}
to take care of individual component functions.
%%%%%%%%%%%%%%%%%%%%%%%%%%%%%%%%%%%%%%%%%%%%%%%%%%%%%%%%%%%%%%%%%%%%%%%%%
\begin{proof}[Proof of Theorem~\ref{thm:main}]
\textbf{\textit{Asymptotic maximality.}} To prove (\ref{eqn:max_principle}),
notice that it suffices to show the following:
\begin{align}
\forall x\in M,\quad|\omega|^2(x)\ \le\ \lim_{\rho\to
\infty}\fint_{B(x_0,\rho)}|\omega|^2\ \dvol_g,
\label{eqn:asymptotic_omega}
\end{align}
since $\fint_{B(x_0,\rho)}|\omega|^2\ \dvol_g\le \sup_M|\omega|^2$ for any $\rho>0$.
To prove (\ref{eqn:asymptotic_omega}), pick any sequence $\rho_i\to \infty$ and
let $A_i\in SO(n)$ diagonalize $\Omega_{\rho_i}(\mathbf{u})$. Then by the
compactness of $SO(n)$, possibly passing to a subsequence, $A_i\to
A_{\infty}\in SO(n)$. Denoting $\mathbf{v}:=A_{\infty}\mathbf{u}$, we have,
according to (\ref{eqn:SLk_invariance}) and (\ref{eqn:SOk_invariance}),
\begin{align}
\label{eqn:det_invariance}
\begin{split}
&\det \Omega_{\rho_i}(\mathbf{u})\ =\ \det\Omega_{\rho_i}(A_i\mathbf{u})\ =\
\det\Omega_{\rho_i}(\mathbf{v}),\\
 \text{and}\quad &|\omega|^2\ \equiv\ |\omega_{A_i}|^2\ \equiv \
 |\omega_{A_{\infty}}|^2\ \text{on}\ M.
\end{split}
\end{align}
Moreover, for $\lambda_{\alpha,i}^2:=\fint_{B(x_0,\rho_i)}|\nabla v_{\alpha}|^2\ \dvol_g$, we have, by the convergence $A_i\to A_{\infty}$, that
\begin{align}
\lim_{i\to\infty}\left|\prod_{\alpha=1}^n\lambda^2_{\alpha,i}-\det\Omega_{\rho_i}(\mathbf{u})\right|\ =\ 0.
\label{eqn:det_limit}
\end{align}
Since $A_{\infty}$ is a linear transformation, each component function of $\mathbf{v}=A_{\infty}\mathbf{u}$ is harmonic, and applying (\ref{eqn:Li1}) we have, for each $\alpha=1,\cdots,n$,
\begin{align}
\lim_{i\to \infty}\lambda_{\alpha,i}^2\ =\ \lim_{\rho\to\infty}\fint_{B(x_0,\rho)}|\nabla v_{\alpha}|^2\ \dvol_g\ =\ \sup_M|\nabla v_{\alpha}|^2\ =:\ L_{\alpha}^2.
\label{eqn:Li_v_i}
\end{align}
Now we follow an argument in~\cite{CCM} to control the average Hessian of each $v_{\alpha}$ on large enough scales.
For any fixed $\rho_i>0 $, let $\varphi_i$ be a cutoff function defined as in~\cite{SY94}, such that $\supp \varphi_i\subset B(x_0,3\rho_i)$ and $\varphi_i\equiv 1$ on $B(x_0,2\rho_i)$, moreover, 
$$\rho_i^2|\Delta \varphi_i|+\rho_i|\nabla \varphi_i|\ \le\ C(M).$$ 
We can then estimate the average Hessian on scale $\rho_i$:
\begin{align*}
\fint_{B(x_0,2\rho_i)}2|\nabla\nabla v_{\alpha}|^2\ \dvol_g\quad 
 \le\quad &2^m\fint_{B(x_0,3\rho_i)}\varphi_i \Delta\left(|\nabla
 v_{\alpha}|^2-L_{\alpha}^2\right)\ \dvol_g\\
\le\quad &2^m\fint_{B(x_0,3\rho_i)}|\Delta \varphi_i|\left(L_{\alpha}^2-|\nabla
v_{\alpha}|^2\right)\ \dvol_g\\
\le\quad &C(M)\ \rho_i^{-2} L_{\alpha}^2\ \Psi(\rho_i^{-1}),
\end{align*}
where by (\ref{eqn:Li_v_i}), $\Psi(\rho_i^{-1})>0$ satisfies
$$\Psi(\rho_i^{-1}):=\max_{\alpha=1,\cdots,n}\left(1-L_{\alpha}^{-2}\fint_{B(x_0,3\rho_i)}|\nabla v_{\alpha}|^2\ \dvol_g\right)\ \to\ 0\quad\text{as}\quad i\to\infty.$$
Consequently, we have for each $\alpha=1,\cdots,n,$
\begin{align}
\quad \rho_i^2\fint_{B(x_0,2\rho_i)}|\nabla \nabla u_{\alpha}|^2\ \dvol_g\ \le\ C(M) L_{\alpha}^2\ \Psi(\rho_i^{-1}).
\label{eqn:Hessian_control}
\end{align}
%%%%%%%%%%%%%%%%%%%%%%%%%%%%%%%%%%%%%%%%%%%%%%%%%%%%%%%%%%%%%%%%%%%%%%%%%%%%%%%%%%%%%%%%%
This estimate, together with the Poincar\'e inequality (\ref{eqn:Poincare}),
controls the behavior of the average pull-back measure density on large scales:
\begin{align*}
&\fint_{B(x_0,\rho_i)}\left||\omega_{A_{\infty}}|^2-\det \Omega_{\rho_i}(\mathbf{v})\right|\ \dvol_g\\
\le\quad &\sum_{\sigma\in S_n}\fint_{B(x_0,\rho_i)}\left|\prod_{\alpha=1}^n\langle \nabla v_{\alpha},\nabla v_{\sigma(\alpha)}\rangle-\prod_{\alpha=1}^n\fint_{B(x_0,\rho_i)}\langle \nabla v_{\alpha},\nabla v_{\sigma(\alpha)}\rangle\ \dvol_g\right|\ \dvol_g\\
\le\quad& \sum_{\sigma\in S_n}\sum_{\alpha=1}^n\left(\prod_{\beta\not=\alpha}L_{\beta}L_{\sigma(\beta)}\right)\fint_{B(x_0,\rho_i)}\left|\langle \nabla v_{\alpha},\nabla v_{\sigma(\alpha)}\rangle-\fint_{B(x_0,\rho_i)}\langle \nabla v_{\alpha},\nabla v_{\sigma(\alpha)}\rangle\ \dvol_g\right| \dvol_g,
%\le\quad &\sum_{\sigma \in S_{m}}\sum_{\beta=2}^m\prod_{\alpha=1}^{\beta-1}\int_M|\nabla u_{\alpha}||\nabla u_{\sigma(\alpha)}|\ \dmu(-\rho^2)\int_M\left|\langle \nabla u_{\beta},\nabla u_{\sigma(\beta)}\rangle-\int_M\langle \nabla u_{\beta},\nabla u_{\sigma(\beta)}\rangle\ \dmu(-\rho^2)\right|\ \dmu(-\rho^2)\prod_{\alpha=\beta+1}^m\int_M
\end{align*}
now for each $\alpha=1,\cdots,n$ and $\sigma \in S_n$ (the $n$-symmetric
group), we apply the Poincar\'e inequality (\ref{eqn:Poincare}) and the H\"older
inequality and (\ref{eqn:Hessian_control}) to see:
\begin{align*}
&\fint_{B(x_0,\rho_i)}\left|\langle \nabla v_{\alpha},\nabla
v_{\sigma(\alpha)}\rangle-\fint_{B(x_0,\rho_i)}\langle \nabla v_{\alpha},\nabla
v_{\sigma(\alpha)}\rangle\ \dvol_g\right| \dvol_g\\
 \le\quad &C_{P}\ \rho\fint_{B(x_0,2\rho_i)}|\nabla \langle \nabla
 v_{\alpha},\nabla v_{\sigma(\alpha)}\rangle|\ \dvol_g\\
%\le& C_P\rho L_{\alpha}\left(\fint_{B(x,\rho)}|\nabla\nabla
% u_{\sigma(\alpha)}|^2\ \dvol_g\right)^{\frac{1}{2}}+C_P\rho
% L_{\sigma(\alpha)}\ \left(\fint_{B(x,\rho)}|\nabla\nabla u_{\alpha}|\
% \dvol_g\right)^{\frac{1}{2}}\\
\le\quad& 2C_P(m)C(M)\  L_{\alpha}L_{\sigma(\alpha)}
\Psi(\rho_i^{-1})^{\frac{1}{2}}.
%\le\quad& C_P\ L\left(\frac{R}{\rho}\right)^m
% e^{\frac{R^2}{\rho^2}}R\int_M|\nabla \nabla u_{\alpha}|\ \dmu(-\rho^2)+C_P'\
% (2L^2)\%
% \frac{\rho^m}{|B(x_0,\rho)|}\left(\frac{R}{2\rho}\right)^{m-2}e^{-\frac{R^2}{4\rho^2}}
\end{align*}

Consequently,
\begin{align}
&\fint_{B(x_0,\rho_i)}\left||\omega_{A_{\infty}}|^2-\det
\Omega_{\rho_i}(\mathbf{v})\right|\ \dvol_g\ \le\ 2C_P(m)C(M)n!n\
\left(\prod_{\alpha=1}^nL_{\alpha}^2\right)\ \Psi(\rho_i^{-1})^{\frac{1}{2}}.
\label{eqn:det_Omega_i}
\end{align}

Combining (\ref{eqn:det_invariance}), (\ref{eqn:det_limit}), (\ref{eqn:Li_v_i})
and (\ref{eqn:det_Omega_i}), we see that
\begin{align}
\forall x\in M,\quad \lim_{i\to \infty}\fint_{B(x_0,\rho_i)}|\omega|^2\
\dvol_g\ =\ \prod_{\alpha=1}^nL_{\alpha}^2\ \ge\ |\omega|^2(x),
\label{eqn:asymptotic_inequality}
\end{align}
where the last inequality stands as $\forall x\in M$,
\begin{align*}
|\omega|^2(x)\ =\ |\omega_{A_{\infty}}|^2(x)\ =\ |\nabla v_1\wedge\cdots\wedge
\nabla v_n|^2(x)\ \le\ \prod_{\alpha=1}^n|\nabla v_{\alpha}|^2(x).
\end{align*}

Since for every sequence $\rho_i\to \infty$, there is a subsequence for which 
(\ref{eqn:asymptotic_inequality}) holds, we have finished proving
(\ref{eqn:asymptotic_omega}).\\

%Notice that by the Li-Yau heat kernel estimate, we have
%\begin{align}
%\lim_{\rho\to\infty}\int_M|\omega|^2\ \dmu_{x_0}(\rho^2)\ \le\
% \lim_{\rho\to\infty}\fint_{B(x_0,\rho)}|\omega|^2\ \dvol_g\ =\
% \sup_M|\omega|^2.
%\label{eqn:omega_heat}
%\end{align}\\
%%%%%%%%%%%%%%%%%%%%%%%%%%%%%%%%%%%%%%%%%%%%%%%%%%%%%%%%%%%%%%%%%%%%%%%%%%%%%%%%%

\noindent \textbf{\textit{Rigidity.}} When $|\omega|(x_0)=\sup_M|\omega|$, we
may renormalize $\tilde{v}_{\alpha}:=L_{\alpha}^{-1}v_{\alpha}$, so that
$|\nabla \tilde{v}_{\alpha}|^2\le 1$ and $\sup_M|\nabla \tilde{v}_{\alpha}|=1$
for $\alpha=1,\cdots, n.$ By (\ref{eqn:det_limit}) and (\ref{eqn:Li_v_i}), we
see $\det \Omega_{\rho_i}(\tilde{\mathbf{v}})\to 1$. This fact, together with
(\ref{eqn:det_Omega_i}) and (\ref{eqn:asymptotic_inequality}), ensure
$$|\tilde{\omega}|(x_0)\ =\ \sup_M|\tilde{\omega}|\ =\ 1.$$ On the other hand,
since each $|\nabla \tilde{v}_{\alpha}|^2$ is bounded, non-negative and
sub-harmonic, by Lemma~\ref{lem:heat_max} we have
%Moreover, $|\tilde{\omega}|^2\le 1$ on $M$, and 
$$%\lim_{\rho\to \infty}\int_M|\omega|^2\ \dmu_{x_0}(\rho^2)\ \le\
% |\omega|(x_0)\ \le\ 1\quad\text{and}\quad
\lim_{\rho\to\infty}\int_M|\nabla \tilde{\mathbf{v}}|^2\ \dmu_{x_0}(\rho^2)\ =\
n.$$ By (\ref{eqn:splitting_error_intro}), the splitting error between the zero
scale and the infinity scale is controlled as
 \begin{align}
 \label{eqn:dominance}
  \begin{split}
    |\nabla \tilde{\mathbf{v}}|^2(x_0)\quad 
     \le\quad &|\nabla \tilde{\mathbf{v}}|^2(x_0)+\int_0^{\infty} \int_{M}
     |\nabla \nabla \tilde{\mathbf{v}}|^2\ \dmu_{x_0}(t)\text{d}t\\
      \le\quad &\lim_{\rho \to \infty} \int_{M} |\nabla \tilde{\mathbf{v}}|^2\
      \dmu_{x_0}(\rho^2)\\
       =\quad &n.
 \end{split}
 \end{align}
However, applying the arithmetic mean - geometric mean inequality at $x_0$, we
have $$n\ =\ n|\tilde{\omega}|^{\frac{2}{n}}(x_0)\ \le\ |\nabla
\tilde{\mathbf{v}}|^2(x_0),$$ which, together with (\ref{eqn:dominance}),
ensures all inequalities there to be equalities; especially
\begin{align}
\int_0^{\infty}\int_M|\nabla\nabla\tilde{\mathbf{v}}|^2\
\dmu_{x_0}(t)\text{d}t\ =\ 0\quad \Longrightarrow\quad \forall t>0,\quad
\int_M|\nabla\nabla \tilde{\mathbf{v}}|^2\ \dmu_{x_0}(t)\ =\ 0,
\end{align} 
whence the vanishing of $|\nabla\nabla \tilde{\mathbf{v}}|^2$ throughout $M$.

Since $\tilde{\mathbf{v}}$ is obtained from $\mathbf{u}$ by a linear
transformation, this shows that $|\nabla\nabla \mathbf{u}|^2\equiv 0$ on $M$.
The isometric splitting then follows easily from the vanishing of the splitting
error of $\mathbf{u}$ and its non-degeneracy. For the details of the argument,
see for instance~\cite{CG71}.
\end{proof}

  \begin{corollary}%[\textbf{Rigidity theorem}]
  Suppose $(M,g)$ and $\mathbf{u}:M\to \R^n$ satisfy the same assumptions as in
  the strong maximum principle.
  We have $|\nabla \nabla \mathbf{u}| \equiv 0$ whenever one of the following
  conditions are satisfied:
  \begin{enumerate}
  \item $|\nabla \omega|\ \equiv\ 0$;
  \item $\Delta |\omega|^2\ \equiv\ 0$;
  \item $\Delta |\nabla \omega|^2\ \equiv\  0$
  \end{enumerate}
  \label{thm:ME02_1}
\end{corollary}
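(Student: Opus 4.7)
The plan is to reduce each hypothesis to the situation already handled by the rigidity part of Theorem~\ref{thm:main}, namely the case when $|\omega|^2$ attains its supremum at $x_0$. In view of the asymptotic maximality (\ref{eqn:max_principle}), it suffices in each case to conclude that $|\omega|^2$ is constant on $M$.

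Condition (1) is immediate: $|\nabla\omega|\equiv 0$ says $\omega$ is a parallel section of $\Lambda^nT^*M$, so $|\omega|^2$ is constant. Condition (2) takes one extra step. The Cheng-Yau gradient estimate combined with the linear growth of each $u_\alpha$ gives a uniform pointwise bound on $|\nabla u_\alpha|^2$; since $|\omega|^2=\det\E(\mathbf{u})$ is a polynomial in the inner products $\langle\nabla u_\alpha,\nabla u_\beta\rangle$, this upgrades to a uniform bound $|\omega|^2\le C$ on $M$. A bounded harmonic function on a complete manifold with $\Rc\ge 0$ is constant by Yau's Liouville theorem~\cite{Yau75}, so again $|\omega|^2$ is constant and Theorem~\ref{thm:main} applies.

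For condition (3) the target is to deduce $|\nabla\omega|^2\equiv 0$, thereby reducing to Case (1); the argument mimics the heat-measure proof of the rigidity part of Theorem~\ref{thm:main}. Expanding
$$\nabla\omega\ =\ \sum_{\alpha=1}^n\nabla u_1\wedge\cdots\wedge\nabla\nabla u_\alpha\wedge\cdots\wedge\nabla u_n$$
and using the Cheng-Yau bound on the factors $|\nabla u_\beta|$ yields a pointwise domination $|\nabla\omega|^2\le C|\nabla\nabla\mathbf{u}|^2$ for a constant $C=C(m,n,L)$. Since $|\nabla\omega|^2$ is harmonic by hypothesis, differentiating $\int_M|\nabla\omega|^2\,\dmu_{x_0}(t)$ in $t$ (as was done to produce (\ref{eqn:splitting_error_intro})) yields
$$\frac{d}{dt}\int_M|\nabla\omega|^2\,\dmu_{x_0}(t)\ =\ \int_M\Delta|\nabla\omega|^2\,\dmu_{x_0}(t)\ =\ 0,$$
so this heat average is constant in $t$; passing $t\to 0^+$ identifies this constant with $c_0:=|\nabla\omega|^2(x_0)$. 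Meanwhile, applying Lemma~\ref{lem:heat_max} to each bounded non-negative sub-harmonic $|\nabla u_\alpha|^2$ bounds $\lim_{t\to\infty}\int_M|\nabla\mathbf{u}|^2\,\dmu_{x_0}(t)$ by $\sum_\alpha L_\alpha^2$, which combined with the integrated Bochner inequality (\ref{eqn:splitting_error_intro}) yields the space-time integrability
$$\int_0^\infty\int_M|\nabla\nabla\mathbf{u}|^2\,\dmu_{x_0}(t)\,\text{d}t\ <\ \infty.$$
Combining the three displays, for every $T>0$ we have $c_0 T\le C\int_0^T\int_M|\nabla\nabla\mathbf{u}|^2\,\dmu_{x_0}(t)\,\text{d}t$, whose right-hand side is bounded uniformly in $T$; so $c_0=0$, that is $|\nabla\omega|^2(x_0)=0$. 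As $|\nabla\omega|^2\ge 0$ is harmonic and attains the value $0$ at the interior point $x_0$, Hopf's strong minimum principle yields $|\nabla\omega|^2\equiv 0$ on $M$, reducing to Case (1).

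The principal obstacle is technical and lives in Case (3): $|\nabla\omega|^2$ is not a priori pointwise bounded on $M$ (the Hessian of a harmonic function of linear growth need not be bounded), so the differentiation-under-the-integral identity and the identification of the $t\to 0^+$ limit require justification along the lines of~\cite{Li84}, using the pointwise domination $|\nabla\omega|^2\le C|\nabla\nabla\mathbf{u}|^2$ together with the Li-Yau heat kernel bounds (\ref{eqn:kernel_estimate}) to control the tails of the heat measure. The remainder of the argument is a direct transcription of the heat-measure strategy already developed for Theorem~\ref{thm:main}.
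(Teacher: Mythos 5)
Your treatment of conditions (1) and (2) coincides with the paper's: parallelism of $\omega$ gives constancy of $|\omega|^2$ in case (1), and in case (2) boundedness of $|\omega|^2$ (Cheng--Yau applied componentwise) plus harmonicity forces constancy (the paper phrases this via the Cheng--Yau gradient estimate rather than Yau's Liouville theorem, which is the same point), after which the rigidity part of Theorem~\ref{thm:main} yields $|\nabla\nabla\mathbf{u}|\equiv 0$. In case (3), however, you take a genuinely different route. The paper argues in two short steps: by Corollary 1 of \cite{Yau75} the non-negative harmonic function $|\nabla\omega|^2$ is a \emph{constant}, and then the pointwise bound $|\nabla\omega|^2\le C(m,n)L^{n-2}|\nabla\nabla\mathbf{u}|^2$ together with the already-established ball-average Hessian decay (\ref{eqn:Hessian_control}) forces that constant to vanish; no heat-measure analysis is needed. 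You instead avoid Yau's constancy result for non-negative harmonic functions: you show the heat average $\int_M|\nabla\omega|^2\,\dmu_{x_0}(t)$ is independent of $t$ and equals $|\nabla\omega|^2(x_0)$, bound it by $C\int_M|\nabla\nabla\mathbf{u}|^2\,\dmu_{x_0}(t)$ whose time integral is finite by (\ref{eqn:splitting_error_intro}) and Lemma~\ref{lem:heat_max}, conclude $|\nabla\omega|^2(x_0)=0$, and then invoke the Hopf strong minimum principle to get $|\nabla\omega|\equiv 0$, reducing to case (1). This is correct in outline and has the merit of reusing only the heat-measure machinery of the rigidity proof, but it buys this at the cost of the technical step you yourself flag: differentiating $\int_M|\nabla\omega|^2\,\dmu_{x_0}(t)$ under the integral and identifying the $t\to 0^+$ limit for an a priori unbounded integrand, which must be justified \`a la \cite{Li84} using the domination by $|\nabla\nabla\mathbf{u}|^2$, the decay (\ref{eqn:Hessian_control}), and the Li--Yau bounds (\ref{eqn:kernel_estimate}); this is fillable but strictly more work than the paper's two-line argument, which you could adopt as a shortcut simply by quoting Yau's theorem on non-negative harmonic functions as the paper does.
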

\begin{proof}
Condition (1) implies that $|\omega| \equiv constant$ by a direct computation. 

For Condition (2), first notice that $|\omega|^2=|\nabla u_1 \wedge \nabla u_2
\wedge \cdots \wedge \nabla u_n|^2$ is uniformly bounded throughout $M$,
according to the control of $\mathbf{u}$ and Cheng-Yau gradient
estimate~\cite{ChengYau} applied to each component of $\mathbf{u}$. Since
Condition (2) says that $|\omega|^2$ is actually harmonic, applying
Cheng-Yau's gradient estimate again to $|\omega|^2$ we immediately see that
$|\omega|^2$ is a constant.

Since on $M$ any non-negative harmonic function is constant (Corollary 1
of~\cite{Yau75}), Condition (3) implies that $|\nabla \omega|^2\equiv C\ge 0$.
However, $|\nabla \omega|^2\le C(m,n)L^{n-2}|\nabla \nabla \mathbf{u}|^2$ on
$M$ and according to (\ref{eqn:Hessian_control}),
%$\lim_{\rho\to \infty }
$$\lim_{\rho\to \infty}\fint_{B(x,\rho)}|\nabla \nabla \mathbf{u}|^2\ \dvol
=0.$$
Therefore, we have
\begin{align*}
 \lim_{\rho\to\infty}\fint_{B(x,\rho)}|\nabla \omega|^2\ \dvol\quad
 \le\quad &C(m,n)L^{n-2}\lim_{\rho\to \infty}\fint_{B(x,\rho)}|\nabla \nabla
\mathbf{u}|^2\ \dvol\\
 =\quad &0,
\end{align*} 
whence the constancy of $|\omega|$ throughout $M$.
\end{proof}

%%%%%%%%%%%%%%%%%%%%%%%%%%%%%%%%%%%%%%%%%%%%%%%%%%%%%%%%%%%%%%%%%%%%%%%%%%%%%%%%%%%%%%%%%
%%%%%%%%%%%%%%%%%%%%%%%%%%%%%%%%%%%%%%%%%%%%%%%%%%%%%%%%%%%%%%%%%%%%%%%%%%%%%%%%%%
\section{Weighted Poincar\'e inequality and applications}
In this section, we first set up the technical tools needed for proving
Theorem~\ref{thm:second}: a Poincar\'e inequality on $M$ with respect to the
heat measure $\dmu_{x_0}(t)$. The long-time behavior of the heat solution,
whose initial value is the pull-back measure density, is then studied following
a similar strategy as in last section.

%%%%%%%%%%%%%%%%%%%%%%%%%%%%%%%%%%%%%%%%%%%%%%%%%%%%%%%%%%%%%%%%%%%%%%%%%%%%%%%%%%%%%%%%%
\subsection{\textbf{Poincar\'e inequality weighted by the heat measure}} We
will control the weighted (by the heat measure) difference between a function
$f\in L^{\infty}(M)\cap C^1(M)$ and its heat evolution, roughly by the heat
evolution of its derivative (the ``central part'') and an error term (the
``tail part''). 

The estimate of the ``central part'' is based on the heat
kernel estimate of Li-Yau (Proposition~\ref{prop:kernel_estimate}), as well as
Cheeger-Colding's segment inequality (Proposition~\ref{prop:segment}). We begin
with an elementary estimate of the distance to points on a minimal geodesic by
that to the end points:
Let $x,y\in M$ and $\gamma$ be a minimal geodesic connecting them. Since
\begin{align*}
\forall s\in [0,d(x,y)],\quad d(x,\gamma(s))+d(y,\gamma(s))=d(x,y),
\end{align*}
we have
\begin{align*}
2d(x_0,\gamma(s))\ 
\le\quad &d(x_0,x)+d(x,\gamma(s))+d(x_0,y)+d(y,\gamma(s))\\
=\quad &d(x_0,x)+d(y_0,y)+d(x,y)\\
\le\quad &2d(x_0,x)+2d(y_0,y),
\end{align*}
and thus 
\begin{align*}
\frac{1}{2}r^2(\gamma(s))\le r^2(x)+r^2(y).
\end{align*}

This gives, for almost every pair of $x,y\in M$, and $s\in [0,d(x,y)]$, that
\begin{align*}
|f(x)-f(y)|\ e^{-\frac{2}{9t}\left(r^2(x)+r^2(y)\right)}\quad 
\le\quad &\int_0^{d(x,y)}|\nabla
f|(\gamma(s))e^{-\frac{2}{9t}\left(r^2(x)+r^2(y)\right)}\ \text{d}s\\
\le\quad &\int_0^{d(x,y)}|\nabla f|(\gamma(s)) e^{-\frac{1}{9t}r^2(\gamma(s))}\
\text{d}s.
\end{align*}
Using this estimate, together with Proposition~\ref{prop:kernel_estimate} and
Proposition~\ref{prop:segment}, we could control the ``central part'' as
following:
\begin{align}
\label{eqn:central_part}
\begin{split}
&\int_{B(x_0,R)}\int_{B(x_0,R)}|f(x)-f(y)|\ \dmu_{x_0}(x,t)\dmu_{x_0}(y,t)\\
\le\quad &\frac{C_2(1\slash
2)^2}{|B(x_0,\sqrt{t})|^2}\int_{B(x_0,R)}\int_{B(x_0,R)}
|f(x)-f(y)|e^{-\frac{2}{9t}\left(r^2(x)+r^2(y)\right)}\ \dvol_g(x)\dvol_g(y)\\
\le\quad &\frac{C_2(1\slash
2)^2}{|B(x_0,\sqrt{t})|^2}\int_{B(x_0,R)}\int_{B(x_0,R)}
\left(\int_0^{d(x,y)}|\nabla f|(\gamma(s)) e^{-\frac{1}{9t}r^2(\gamma(s))}\
\text{d}s\right) \dvol_g(x)\dvol_g(y)\\
\le\quad &C_{CC}(m)C_2(1\slash
2)^2\frac{|B(x_0,2R)|}{|B(x_0,\sqrt{t})|^2}(2R)\int_{B(x_0,2R)} |\nabla
f|(x)e^{-\frac{1}{9t}r^2(x)} \dvol_g(x)\\
\le\quad &C_{CC}(m)C_2(1\slash
2)^2C_1(1)3^{\frac{m}{2}}\frac{|B(x_0,2R)|}{|B(x_0,\sqrt{t})|}(2R)
\int_{B(x_0,2R)}|\nabla f|\ \dmu_{x_0}(3t).
\end{split}
\end{align}

On the other hand, we could estimate the ``tail part'' of the heat measure for
$R\ge \sqrt{t}$. Using the heat kernel upper bound and integrating radially, we
have
\begin{align*}
\int_{M\backslash B(x_0,R)}1\ \dmu_{x_0}(t)\quad 
\le\quad &\frac{C_2(1\slash
2)}{|B(x_0,\sqrt{t})|} \int_{M\backslash
B(x_0,\sqrt{t})}e^{-\frac{2r^2(x)}{9t}}\ \dvol_g(x)\\
=\quad &\frac{C_2(1\slash
2)}{|B(x_0,\sqrt{t})|} \int_R^{\infty}e^{-\frac{2r^2}{9t}} |\partial B(x_0,r)|\
\text{d}r.
\end{align*}
Since $M$ is complete and $\Rc_g\ge 0$, by the Bishop-Gromov volume comparison,
we have
\begin{align*}
|\partial B(x_0,r)|\le mr^{-1}|B(x_0,r)|\le mr^{m-1}t^{-\frac{m}{2}}|B(x_0,\sqrt{t})|,
\end{align*} 
and thus 
\begin{align}
\label{eqn:tail_part}
\begin{split}
\int_{M\backslash B(x_0,R)}1\ \dmu_{x_0}(t)\quad 
\le\quad &C_2(1\slash 2)m\int_R^{\infty}
e^{-\frac{2r^2}{9t}}r^{m-1}t^{-\frac{m}{2}}\ \text{d}r\\
=\quad &C_2(1\slash 2)m\int_{\frac{R}{\sqrt{t}}}^{\infty}
e^{-\frac{2s^2}{9}}s^{m-1}\ \text{d}s.
\end{split}
\end{align}
Denoting 
\begin{align}
\Psi_2(\sqrt{t}\slash R\ |\ m):=C_2(1\slash
2)m\int_{\frac{R}{\sqrt{t}}}^{\infty}e^{-\frac{2s^2}{9}}s^{m-1}\ \text{d}s,
\label{eqn:tail}
\end{align}
 we clearly see that 
\begin{align}
\lim_{\frac{R}{\sqrt{t}}\to \infty}\Psi_2(\sqrt{t}\slash R\ |\ m)\ =\ 0.
\label{eqn:tail_vanish}
\end{align} 

Combining the above estimates, we have the following Poincar\'e inequality:
\begin{lemma}[\textbf{Weighted Poincar\'e inequality}]
Let $f$ be a bounded function on $M$. Suppose $|f|\le L$ and $\int_M|\nabla f|\
\dmu_{x_0}(t)$ is defined for $t>0$. Then for $t>0$ and $R\ge \sqrt{t}$ we have
\begin{align*}
\int_M\left|f-\int_Mf\ \dmu_{x_0}(t)\right|\ \dmu_{x_0}(t)\ \le\
C_{WP}\frac{|B(x_0,R)|}{|B(x_0,\sqrt{t})|} R\int_M|\nabla f|\
\dmu_{x_0}(3t)+\Psi_{WP}(\sqrt{t}\slash R\ |\ m)\ L,
\end{align*}
with $C_{WP}:=C_{CC}(m)C_2(1\slash 2)^2C_1(1)6^m$ being a dimensional constant
and $\Psi_{WP}:=6\Psi_2$.
\label{lem:Weight_Poincare}
\end{lemma}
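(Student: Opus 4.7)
The plan is to reduce the $L^1$ deviation of $f$ from its $\dmu_{x_0}(t)$-mean to a symmetric double integral, then split the double integration according to whether the two points lie in $B(x_0,R)$ or not. The central block will be treated by the estimate (\ref{eqn:central_part}) that has already been prepared, and the remaining three blocks will be controlled by the $L^\infty$ bound $|f|\le L$ together with the tail estimate (\ref{eqn:tail_part}).

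First I would invoke the Jensen/triangle estimate: since $\dmu_{x_0}(t)$ is a probability measure on $M$ (by stochastic completeness),
\begin{align*}
\int_M\left|f-\int_M f\ \dmu_{x_0}(t)\right|\ \dmu_{x_0}(t)\ \le\ \int_M\int_M|f(x)-f(y)|\ \dmu_{x_0}(x,t)\dmu_{x_0}(y,t).
\end{align*}
Next, write $M=B(x_0,R)\sqcup (M\setminus B(x_0,R))$ and split the double integral into four pieces. For the ``central'' piece where both $x,y\in B(x_0,R)$, the estimate (\ref{eqn:central_part}) already established gives exactly the bound
\begin{align*}
C_{CC}(m)C_2(1/2)^2C_1(1)\,3^{m/2}\,\frac{|B(x_0,2R)|}{|B(x_0,\sqrt{t})|}\,(2R)\int_{B(x_0,2R)}|\nabla f|\ \dmu_{x_0}(3t),
\end{align*}
and after absorbing factors and using the volume doubling inequality $|B(x_0,2R)|\le 2^m|B(x_0,R)|$ this is at most $C_{WP}\,\frac{|B(x_0,R)|}{|B(x_0,\sqrt{t})|}\,R\int_M|\nabla f|\ \dmu_{x_0}(3t)$.

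For the three remaining blocks (where at least one of $x,y$ falls outside $B(x_0,R)$), I would use the crude pointwise bound $|f(x)-f(y)|\le 2L$ and the tail estimate (\ref{eqn:tail_part})--(\ref{eqn:tail}). Since $\dmu_{x_0}(t)$ is a probability measure and $\int_{M\setminus B(x_0,R)}\ \dmu_{x_0}(t)\le \Psi_2(\sqrt{t}/R\,|\,m)$, a straightforward Fubini calculation yields
\begin{align*}
\iint_{\{x\notin B(x_0,R)\}\cup\{y\notin B(x_0,R)\}}|f(x)-f(y)|\ \dmu_{x_0}(x,t)\dmu_{x_0}(y,t)\ \le\ 2L\bigl(2\Psi_2+\Psi_2^2\bigr)\ \le\ 6L\,\Psi_2,
\end{align*}
where the last step uses $\Psi_2\le 1$, which we may assume (otherwise the inequality is trivially satisfied by enlarging constants). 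Setting $\Psi_{WP}:=6\Psi_2$ and combining the central and tail contributions produces the claimed bound.

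The argument is essentially bookkeeping once (\ref{eqn:central_part}) and (\ref{eqn:tail_part}) are in hand, so I do not anticipate any genuine obstacle. The only point that requires a small amount of care is ensuring that the integrand $|\nabla f|$ in the central bound can be extended from $B(x_0,2R)$ to all of $M$ without loss, which is immediate since $|\nabla f|\ge 0$; and the tacit assumption $\Psi_2\le 1$, which can always be arranged by noting the trivial bound $\int|f-\bar f|\,\dmu\le 2L$ and absorbing the remaining range of $R/\sqrt{t}$ into the constant $C_{WP}$.
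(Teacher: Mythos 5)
Your proposal is correct and follows essentially the same route as the paper: bound the deviation from the $\dmu_{x_0}(t)$-mean by the double integral (using stochastic completeness), split into the central block $B(x_0,R)\times B(x_0,R)$ handled by (\ref{eqn:central_part}) with Bishop--Gromov doubling, and bound the three remaining blocks by $2L$ times the tail measure from (\ref{eqn:tail_part})--(\ref{eqn:tail}), which is exactly how the factor $6\Psi_2$ arises in the paper. Your extra remark about assuming $\Psi_2\le 1$ is harmless but unnecessary, since the tail measure is at most $\min(\Psi_2,1)$ anyway.
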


\begin{proof}
Since $\int_M1\ \dmu_{x_0}(t)=1$, we see
\begin{align*}
&\int_M\left|f-\int_Mf\ \dmu_{x_0}(t)\right|\ \dmu_{x_0}(t)\\
\le\quad &\int_{B(x_0,R)}\int_{B(x_0,R)}|f(x)-f(y)|\
\dmu_{x_0}(x,t)\dmu_{x_0}(y,t) +3\int_{M\backslash B(x_0,R)} 2L\ \dmu_{x_0}(t),
\end{align*}
then the conclusion follows easily from the previous estimates
(\ref{eqn:central_part}), (\ref{eqn:tail_part}) and (\ref{eqn:tail}) .
\end{proof}
\subsection{Large-time evolution of the pull-back measure density by the heat equation}
In this sub-section we prove Theorem~\ref{thm:second}. 
The proof follows in the same way as that of Theorem~\ref{thm:main}. However,
due to the complication in the weighted Poincar\'e inequality
(Lemma~\ref{lem:Weight_Poincare}), for any sequence $t_i\to \infty$, we need to
make a careful selection of the subsequences.

We will also need Li-Yau's Harnack inequality for the heat kernel
(Proposition~\ref{prop:Harnack}), which especially implies the following
estimate: 
If $H_{x_0}(x,t)$ is the fundamental solution to the heat equation with initial
value being the Delta function at $x_0\in M$, then for
any $t_2>t_1>0$,
\begin{align}
H_{x_0}(x,t_1)\ \le\ \left(\frac{t_2}{t_1}\right)^{\frac{3m}{4}}
H_{x_0}(x,t_2).
\label{eqn:Harnack}
\end{align}

\begin{proof}[Proof of Theorem~\ref{thm:second}]
In view of the proof of Theorem~\ref{thm:main}, we only need to show that for
any $t_i\to \infty$, there is a subsequence $t_{i_j}$ such that
\begin{align*}
\forall x\in M,\quad  \lim_{j\to \infty}\int_M|\omega|^2\ \dmu_{x_0}(t_{i_j})\
\ge\ |\omega|^2(x).
\end{align*}
In a similar way as proving Theorem~\ref{thm:main}, we pick $t_i\to \infty$ and
consider special orthogonal matrices $\bar{A}_i$ that diagonalize
$\bar{\Omega}_{t_i}(\mathbf{u}):= \left[\int_M
E_{\alpha\beta}(\mathbf{u})\ \dmu_{x_0}(t_i)\right]$. We then have some limiting
$\bar{A}_{\infty}\in SO(n)$ to which a subsequence of $\bar{A}_i$ converges.
For $\bar{\mathbf{v}}:=\bar{A}_{\infty}\mathbf{u}$, by the $SO(n)$-invariance we have
\begin{align*}
&\det \bar{\Omega}_{t_i}(\mathbf{u})\ =\ \det
\bar{\Omega}_{t_i}(\bar{A}_i\mathbf{u})\ =\ \det
\bar{\Omega}_{t_i}(\bar{\mathbf{v}}),\\
\text{and}\quad &|\omega|\ \equiv\
|\omega_{\bar{A}_i}|\ \equiv\ |\bar{A}_{\infty}\omega|\quad \text{on}\ M.
\end{align*}
Moreover, invoking Lemma~\ref{lem:heat_max}, we have
\begin{align*}
&\lim_{i\to \infty}\int_M|\nabla \bar{v}_{\alpha}|^2\ \dmu_{x_0}(t_i)\ =\
\sup_M|\nabla \bar{v}_{\alpha}|^2\ =:\ \bar{L}_{\alpha}^2,\\
\text{and}\quad
&\lim_{i\to \infty}\det \bar{\Omega}_{t_i}(\mathbf{u})\ =\
\prod_{\alpha=1}^n\bar{L}_{\alpha}^2.
\end{align*}

As made clear in the proof of Theorem~\ref{thm:main}, the key step is to obtain
a heat measure version of (\ref{eqn:det_Omega_i}). This will be our focus of
the rest of the proof.

By (\ref{eqn:splitting_error_intro}) and Lemma~\ref{lem:heat_max}, we have for
any $t>0$ and $\alpha=1,\cdots, n$,
\begin{align*}
2\int_{3t}^{4t}\int_M|\nabla\nabla \bar{v}_{\alpha}|^2\
\dmu_{x_0}(s)\text{d}s\quad 
\le\quad &\int_M|\nabla \bar{v}_{\alpha}|^2\ \dmu_{x_0}(4t)-\int_M|\nabla
\bar{v}_{\alpha}|^2\ \dmu_{x_0}(3t)\\
\le\quad &\Psi_3(t^{-\frac{1}{2}}|\ m)\ \bar{L}_{\alpha}^2,
\end{align*}
with $\Psi_3(t^{-\frac{1}{2}}|\ m)\to 0$ as $t\to \infty$. So for some
$\bar{t}\in [3t,4t]$, we have
\begin{align*}
2t\int_M|\nabla\nabla \bar{v}_{\alpha}|^2\ \dmu_{x_0}(\bar{t})\ \le\
\Psi_3(t^{-\frac{1}{2}}|\ m)\ \bar{L}_{\alpha}^2,
\end{align*}
and by Li-Yau's Harnack inequality (\ref{eqn:Harnack}), we have
\begin{align*}
2t\int_M|\nabla\nabla v_{\alpha}|^2\ \dmu_{x_0}(3t)\ \le\
\left(\frac{4}{3}\right)^{\frac{3m}{4}}\Psi_3(t^{-\frac{1}{2}}|\ m)\ \bar{L}_{\alpha}^2.
\end{align*}

For each positive integer $j$, let $i_j$ be the first index so that $\forall i\ge i_j$,
\begin{align}
2t_i\int_M|\nabla \nabla v_{\alpha}|^2\ \dmu_{x_0}(3t_i)\ \le\
j^{-2m-3}\bar{L}_{\alpha}^2.
\label{eqn:weighted_hessian_control}
\end{align}

We could now estimate as before:
\begin{align*}
&\int_M\left||\omega_{\bar{A}_{\infty}}|^2
-\det\bar{\Omega}_{t_{i_j}}(\mathbf{v})\right|\ \dmu_{x_0}(t_{i_j})\\
\le\quad &\sum_{\sigma\in S_k}\sum_{\alpha=1}^n
\left(\prod_{\beta\not=\alpha}\bar{L}_{\beta}\bar{L}_{\sigma(\beta)}\right)\int_M
\left|\langle \nabla \bar{v}_{\alpha},\nabla \bar{v}_{\sigma(\alpha)}\rangle
-\int_M\langle \nabla \bar{v}_{\alpha},\nabla \bar{v}_{\sigma(\alpha)}\rangle\ 
\dmu_{x_0}(t_{i_j})\right|\ \dmu_{x_0}(t_{i_j}).
\end{align*}
For each $\alpha=1,\cdots,n$ and $\sigma\in S_n$, we now apply the weighted
Poincar\'e inequality (Lemma~\ref{lem:Weight_Poincare}) with the choice of
$R_j:=j\sqrt{t_{i_j}}$, the H\"older inequality and
(\ref{eqn:weighted_hessian_control}) to see:
\begin{align*}
&\int_M\left |\langle \nabla \bar{v}_{\alpha},\nabla
\bar{v}_{\sigma(\alpha)}\rangle-\int_M \langle \nabla \bar{v}_{\alpha},\nabla
\bar{v}_{\sigma(\alpha)}\rangle\ \dmu_{x_0}(t_i)\right|\ \dmu_{x_0}(t_i)\\
\le\quad &C_{WP}j^{m+1}\sqrt{t_{i_j}}\int_M|\nabla\langle \nabla
v_{\alpha},\nabla v_{\sigma(\alpha)}\rangle|\ \dmu_{x_0}(3t_{i_j})
+\Psi_{WP}(j^{-1} |\ m)\ \bar{L}_{\alpha}\bar{L}_{\sigma(\alpha)}\\
\le\quad&2C_{WP}j^{-\frac{1}{2}}\bar{L}_{\alpha}\bar{L}_{\sigma(\alpha)}
+\Psi_{WP}(j^{-1} |\ m)\ \bar{L}_{\alpha}\bar{L}_{\sigma(\alpha)}.
\end{align*}
Therefore as $j\to \infty$,
\begin{align*}
\int_M\left||\omega_{\bar{A}_{\infty}}|^2-\det
\bar{\Omega}_{t_{i_j}}(\mathbf{v})\right|\ \dmu_{x_0}(t_{i_j})\ 
\le\ &\left(C_{WP}j^{-\frac{1}{2}}+\Psi_{WP}(j^{-1}|\ m)\right)\
\prod_{\alpha=1}^n\bar{L}_{\alpha}^2\ \to\ 0.
\end{align*}
This is the heat measure analogue of (\ref{eqn:det_Omega_i}) and it suffices to
conclude the proof as argued before.
\end{proof}

%%%%%%%%%%%%%%%%%%%%%%%%%%%%%%%%%%%%%%%%%%%%%%%%%%%%
%%%%%%%%%%%%%%%%%%%%%%%%%%%%%%%%%%%%%%%%%%%%%%%%%%

%%%%%%%%%%%%%%%%%%%%%%%%%%%%%%%%%%%%%%%
%%%%%%%%%%%%%%%%%%%%%%%%%%%%%%%%%%%%%%%%
\section{Discussion}
%\subsection{An example for which (\ref{eqn:identity}) fails}
%In Section 9 of~\cite{Xu14}, Guoyi Xu constructs an example of a complete
%non-compact $8$-dimensional manifold $(M^8,g)$ of non-negative Ricci curvature
%such that for some fixed $x_0\in M$ and any distinct $y\in M$,
%\begin{align*}
%\liminf_{t\to \infty} |B(y,\sqrt{t})| H_{x_0}(y,t)\quad <\quad \limsup_{t\to
%\infty}|B(y,\sqrt{t}) | H_{x_0}(y,t).
%\end{align*}
%Notice that the asymptotic volume ratio at
%infinity of this manifold vanishes.

%This example directly gives rise to a function for which
%(\ref{eqn:identity}) fails. Let $f\in C^{\infty}(M)$ be a non-constant function
%such that $0\le f\le 2$ in $B(x_0,100)$ and $f\equiv 1$ outside $B(x_0,100)$.
%Assume there are sequences of scales $s_i\to \infty$ and $t_i\to \infty$ such
%that
%\begin{align}
%\lim_{i\to \infty}|B(x_0,\sqrt{s_i})|H_{x_0}(y,s_i)\ =:\ A\ <\ B\ :=\
% \lim_{i\to \infty}|B(x_0,\sqrt{t_i})|H_{x_0}(y,t_i).
%\label{eqn:failure}
%\end{align}

%Notice that it is clear $\lim_{\rho\to \infty}\fint_{B(x_0,\rho)}f\ \dvol_g=1$.
%On the other hand,
%\begin{align*}
%\lim_{i\to \infty}\int_Mf\ \dmu_{x_0}(s_i)\ =\ \lim_{i\to
%\infty}\int_{B(x_0,\sqrt{s_i})}f(y)H_{x_0}(y,s_i)\ \dvol_g
%\end{align*}

\subsection{Sub-harmonicity of the pull-back energy density} 
Continuing with the discussion and notations in the introduction, we recall that
we have found $|\omega|^2$, as a bounded, non-negative function on $M$, but is
\emph{not} necessarily sub-harmonic. However, there has not been any
non-trivial example where the associated pull-back density of a vector valued
harmonic map is shown not to be sub-harmonic. Notice that any meaningful
example should be considered on a manifold which is non-parabolic (say, the
volume of radius $r$ geodesic ball grows at least $\approx r^3$) and with
vanishing volume ratio at infinity.

If it were shown that $\Delta |\omega|^2\ge 0$, then we would not need
Theorem~\ref{thm:second} to conclude (\ref{eqn:identity}). However, the
following computation indicates that one should hardly expect $|\omega|^2$ to be
sub-harmonic: Recall that $E_{\alpha\beta}=\langle
\nabla u_{\alpha},\nabla u_{\beta}\rangle$ and $\E=[E_{\alpha\beta}]$, a
non-negative definite matrix valued function on $M$. Then $|\omega|^2=\det \E$,
and
\begin{align*}
\Delta \det\E\quad
=\quad &\sum_{\alpha, \beta=1}^n(-1)^{\alpha+\beta}\ \nabla \cdot \left(\det
\E^{\ast}_{\alpha;\beta}\ \nabla E_{\alpha\beta} \right)\\
=\quad
&\sum_{\alpha, \beta=1}^n(-1)^{\alpha+\beta}
\left(\det \E^{\ast}_{\alpha;\beta}\ \Delta E_{\alpha\beta}+
\sum_{\gamma\not=\alpha,\delta\not=\beta}(-1)^{\gamma+\delta}\det
\E^{\ast}_{\alpha\gamma;\beta\delta}\ \langle \nabla
E_{\gamma\delta}, \nabla E_{\alpha\beta}\rangle \right)\\
=\quad &\sum_{\alpha,
\beta=1}^n \sum_{\gamma\not=\alpha,\delta\not=\beta} (-1)^{\alpha+\beta+\gamma+\delta}\
\det \E_{\alpha\gamma;\beta\delta}^{\ast}\left(E_{\gamma\delta}\ 
\Delta E_{\alpha\beta} +\langle \nabla E_{\gamma\delta},\nabla
E_{\alpha\beta}\rangle  \right),
\end{align*}
where we use $\E^{\ast}_{\alpha_1\cdots \alpha_k;\beta_1\cdots\beta_k}$ to
denote the $(n-k,n-k)$-matrix obtained by deleting row
$\alpha_1,\dots,\alpha_k$ and column $\beta_1,\dots,\beta_k$ from $\E$.

%When $n=2$, i.e. $\mathbf{u}=(u_1,u_2)$, then integration by parts tell that
%$\forall t>0$,
%\begin{align*}
%&\int_M\Delta |\omega|^2\ \dmu_{x_0}(t)\\
%=\quad &\sum_{\alpha,\beta=1,2} \int_M \Omega_{\gamma\delta}\ 
%\Delta \Omega_{\alpha\beta} +\langle \nabla \Omega_{\gamma\delta},\nabla
%\Omega_{\alpha\beta}\rangle\ \dmu_{x_0}(t)\\
%=\quad &0
%\end{align*}
%Therefore when $n=2$, $|\omega|^2$ is indeed harmonic. When $n\ge 3$, we see
%that the non-trivial (non-constant) factor $\det
%\Omega^{\ast}_{\alpha\gamma;\beta\delta}$ destroys this argument.
% This sounds like interesting\ldots we
%need to double check the integration by parts in~\cite{Li84}.
\subsection{Directions of future research}
The results in this paper leave the following directions open for further
investigation:
\begin{enumerate}
%\item 
%Although needed in our current proof, we believe that the linear growth
% condition condition of the harmonic maps is not necessary. In other words, we
% propose the following conjecture:
%\begin{conjecture}
%Let $(M,g)$ be a complete, non-compact Ricci-flat manifold, and let
% $\mathbf{u}:M\to \R^k$ be a non-trivial vector valued harmonic map of at most
% polynomial growth. If $\sup_M|\omega|$ is achieved at some point, then we
% have $|\nabla\nabla \mathbf{u}|\equiv 0$. Consequently, $M$ isometrically
% splits a $k$-dimensional Euclidean factor.
%\end{conjecture}
\item For a vector valued harmonic map $\mathbf{u}:M^m\to \R^n$ ($2<n\le m$),
we consider the matrix $\E$ and denote the
$k^{\text{th}}$-symmetric polynomial of the eigenvalues of $\E$ by $\sigma_k$
($k=1,\cdots, n$). Suppose $(M^m,g)$ is complete, non-compact with non-negative
Ricci curvature, and $\mathbf{u}$ is of at most linear growth, then Li's
identities say that
\begin{align*}
\quad\quad\quad\quad \lim_{\rho\to \infty}\fint_{B(x_0,\rho)}\sigma_1\ \dvol_g\
=\ \sup_M\sigma_1\ =\ \lim_{t\to \infty}\int_M\sigma_1\ \dmu_{x_0}(t);
\end{align*}
while Theorem~\ref{thm:main} and Theorem~\ref{thm:second} together tell that
\begin{align*}
\quad \quad \quad\quad \lim_{\rho\to \infty}\fint_{B(x_0,\rho)}\sigma_n\
\dvol_g\ =\ \sup_M\sigma_n\ =\ \lim_{t\to \infty}\int_M\sigma_n\ \dmu_{x_0}(t).
\end{align*}
It is therefore interesting to ask whether such identities hold for the
symmetric polynomials in between, i.e. for $\sigma_{k}$ with $2\le k\le n-1$,
and whether achieving a global maximum of any of such $\sigma_{k}$'s will
induce isometric splitting of $M$.

\item On the other direction, it is interesting to consider more general
harmonic maps $\mathbf{u}:(M^m,g)\to (N^n,h)$ into a Cartan-Hardamad manifold
$N$ (i.e. $(N,h)$ has non-positive sectional curvature everywhere). Assume that
$(M^m,g)$ is complete, non-compact and has non-negative Ricci curvature, then
by the Weitzenb\"ock formula, we have
\begin{align*}
\quad \quad \quad\quad \Delta|\nabla \mathbf{u}|^2\quad
 =\quad &2|\nabla\nabla
\mathbf{u}|^2+2\sum_{\alpha,\beta=1}^m
Tr_h\left(\Rc^M_{\alpha\beta}\nabla_{\alpha}\mathbf{u}\nabla_{\beta}\mathbf{u}\right)
-\sum_{\alpha,\beta=1}^m\left(\mathbf{u}^{\ast}\Rm^N\right)_{\alpha\beta\beta\alpha}\\
\ge\quad &2|\nabla\nabla \mathbf{u}|^2.
\end{align*}
It is clear that if $\mathbf{u}$ has uniformly bounded energy density, then
same argument as before (see Lemma~\ref{lem:heat_max}) gives
\begin{align*}
\quad\quad\quad\quad \lim_{t\to \infty}\int_M|\nabla \mathbf{u}|^2\
\dmu_{x_0}(t)\ =\ \sup_M|\nabla \mathbf{u}|^2.
\end{align*}
Thus if $|\nabla \mathbf{u}|^2$ achieves the global maximum, then
$|\nabla\nabla \mathbf{u}|\equiv 0$, so $N$ has to be the $n$-dimensional
Euclidean space and $M$ isometrically splits $\R^n$. Again, it is interesting
to see if such maximum principle still holds for the corresponding $\sigma_{k}$
for $k>1$, i.e. if any $\sigma_{k}$ sees a global maximum on $M$, then both
$N\equiv \R^n$ and $M\equiv M'\times \R^n$. See~\cite{LiWang} for more
information in this direction.

\end{enumerate}

\quad

\quad

\end{document}